\newtheorem{theorem}{Theorem} 
\newtheorem{lemma}[theorem]{Lemma} 
\newtheorem{corollary}[theorem]{Corollary}
\theoremstyle{definition}
\newtheorem{remark}[theorem]{Remark}
\newcommand{\R}{{\mathbb R}} 
\newcommand{\N}{{\mathbb N}}
\newcommand{\E}{{\mathbb E}}
\newcommand{\dd}{{\rm d}}
\newcommand{\bigo}[1]{\mathcal{O}(#1)}
\newcommand{\cW}{\mathcal{W}}
\DeclareMathOperator{\Tr}{Tr}
\title[]
{Drift-preserving numerical integrators for stochastic Hamiltonian systems}
\date{\today}
\author{Chuchu Chen}
               \address{State Key Laboratory of Scientific and Engineering Computing, 
               Institute of Computational Mathematics and Scientific/Engineering Computing, Academy of Mathematics and Systems Science, Chinese Academy of Sciences, 100190~Beijing, China} 
               \email{\tt chenchuchu@lsec.cc.ac.cn}
\author{David Cohen}
              \address{Department of Mathematics and Mathematical
              Statistics, Ume{\aa} University, 90187~Ume{\aa}, 
              Sweden} 
              \email{\tt david.cohen@umu.se} 
\author{Raffaele D'Ambrosio}
          \address{Department of Engineering and Computer Science and Mathematics, 
           University of L'Aquila, L'Aquila (AQ), Italy}  
          \email{\tt raffaele.dambrosio@univaq.it} 
\author{Annika Lang}
       \address{Department of Mathematical Sciences, Chalmers University of Technology \& 
        University of Gothenburg, 41296~G\"oteborg, Sweden} 
        \email{\tt annika.lang@chalmers.se}
\begin{document}


\begin{abstract}
The paper deals with numerical discretizations of separable nonlinear Hamiltonian systems with additive noise. 
For such problems, the expected value of the total energy, along the exact solution, drifts linearly with time. 
We present and analyze a time integrator having the same property for all times. Furthermore, strong and weak 
convergence of the numerical scheme along with efficient multilevel Monte Carlo estimators are studied. 
Finally, extensive numerical experiments illustrate the performance of the proposed numerical scheme. 
\end{abstract}


\maketitle
{\small\noindent 
{\bf AMS Classification.} 65C20. 65C30. 60H10. 60H35

\bigskip\noindent{\bf Keywords.} Stochastic differential equations. 
Stochastic Hamiltonian systems. Energy. Trace formula. Numerical schemes. 
Strong convergence. Weak convergence. Multilevel Monte Carlo. 

\section{Introduction}
Hamiltonian systems are universally used as mathematical models to describe the dynamical evolution of physical systems in science and engineering. 
If the Hamiltonian is not explicitly time dependent, then its value is constant along exact trajectories of the problem. 
This constant equals the total energy of the system. 

The recent years have seen a lot of research activities in the design and numerical analysis of energy-preserving numerical integrators 
for deterministic Hamiltonian systems, see for instance 
\cite{bgi18,bi12,cos14,ch11,g96,h10,it09,k16,m14,mb16,mqr99,qm08,wws13} 
and references therein. 

This research has naturally come to the realm of stochastic differential equations (SDEs). 
Indeed, many publications on energy-preserving 
numerical integration of stochastic (canonical) Hamiltonian systems have lately appeared. 
Without being too exhaustive, we mention \cite{cch16,cd14,fl09,hzz11} 
as well as the works \cite{MR3273263,MR3576620,chj18} on invariant preserving schemes. 
Observe, that such extensions describe 
Hamiltonian motions perturbed by a multiplicative white noise in the sense of Stratonovich. 
In some sense, this respects the geometric structure of the phase space. Hence one also has 
conservation of the Hamiltonian (along any realizations). 
The derivation of energy-preserving schemes in the Stratonovich setting 
follows without too much effort from already existing 
deterministic energy-preserving schemes. This is due to the fact that 
most of the geometrical features underlying deterministic Hamiltonian
mechanics are preserved in the Stratonovich setting. 
This is not the case in the It\^o framework considered here.

An alternative to the above Stratonovich setting is to add a random term to the deterministic 
Hamiltonian in an additive way. One can then show that the expected value of the Hamiltonian (along the exact trajectories) 
now drifts linearly with time, leading to a so called trace formula, see for instance the work \cite{smh04} in the case of a linear stochastic oscillator. 
To the best of our knowledge, drift-preserving numerical schemes 
for such a problem have only been theoretically studied in the case of quadratic Hamiltonians \cite{bb12,bb14,c12,cjz17,cs12,hsw06,smh04,st15} and in the case of linear 
stochastic partial differential equations \cite{ac18,aclw16,cchs19,cls13,h08}. 
For instance, recent contributions in structure-preserving numerics for stochastic problems have addressed conservation properties of stochastic Runge--Kutta methods applied to stochastic Hamiltonian problems of It\^o type~\cite{bb14,bb12}. In particular, proper stochastic perturbations of symplectic Runge--Kutta methods have been investigated as drift-preserving schemes. However, these methods seem particularly effective only for linear problems, while, in the nonlinear case, the drift is not accurately preserved in time. Moreover, in the case of additive noise, the more the amplitude of the stochastic term increases, the more the accuracy of the drift preservation deteriorates (see, for instance, Table~1 in~\cite{bb14}). Hence, this evidence reveals a gap in the existing literature that requires \emph{ad hoc} numerical methods, effective in preserving the drift in the expected Hamiltonian also for the nonlinear case. Closing this gap is the main purpose of this article.

In the present publication, we develop and analyze drift-preserving schemes for  
stochastic separable Hamiltonian systems, not necessarily quadratic, 
driven by an additive Brownian motion. In particular, 
we propose a novel numerical scheme that exactly satisfies the 
trace formula for the expected value of the Hamiltonian for all times (Section~\ref{sect-dp}). 
In addition, under general assumptions, we show mean-square and weak convergence of 
the newly proposed numerical scheme in Sections~\ref{sect-ms}~and~\ref{sect-weak} 
to give a complete picture of its properties. 
We show that both errors converge with order~$1$ and that the weak order 
is in general not twice the strong order in this specific case. 
On top of that, in Section~\ref{sect-weak} we introduce Monte Carlo and multilevel Monte Carlo estimators 
for the given numerical scheme and derive their convergence 
properties along with their computational costs. 
The main properties of the proposed time integrators are then 
numerically illustrated in Section~\ref{sec:numerics}. 

\section{Presentation of the drift-preserving scheme}\label{sect-dp}
Let $T>0$, $d$ be a fixed positive integer, $(\Omega, \mathcal F, \mathbb P)$ be a probability space with a normal filtration 
$(\mathcal F_t)_{t\in[0,T]}$, and let $W\colon[0,T]\times\Omega\to\mathbb R^d$ be a standard 
$(\mathcal F_t)_{t\in[0,T]}$-Brownian motion with continuous sample paths on $(\Omega, \mathcal F, \mathbb P)$.

For a positive integer $m$ and a smooth potential $V\colon \R^m\to\R$, let us consider the separable Hamiltonian 
$$
H(p,q)=\frac12\sum_{j=1}^mp_j^2+V(q).
$$
Consider now the following corresponding stochastic Hamiltonian system with additive noise
\begin{equation}\label{prob}
\begin{split}
\text dq(t) &=\nabla_pH(p(t),q(t))\,\text dt=p(t)\,\text dt\\
\text dp(t) &=-\nabla_qH(p(t),q(t))\,\text dt+\Sigma\,\text dW(t)=-V'(q(t))\,\text dt+\Sigma\,\text dW(t).
\end{split}
\end{equation}

Here, the constant matrix $\Sigma\in\R^{m\times d}$ has entries denoted by $\sigma_{ij}$. 
In addition, we assume that the initial values $(p_0,q_0)$ of the above SDE have finite energy in expectation, 
i.\,e., $\E[H(p_0,q_0)] < + \infty$. 
This setting covers, for instance, the following examples: the Hamiltonian considered in \cite{bb14} (where the matrix $\Sigma$ is diagonal), 
the linear stochastic oscillator from \cite{smh04}, various Hamiltonians studied in \cite[Chap.~4]{Milstein2004}.

\begin{remark}
In general, most results from the present publication could be extended to the case of a separable Hamiltonian system with additive martingale L\'evy noise. 
Extension to the case of a multiplicative (It\^o) noise would lead to a trace formula for the energy 
that depends on~$q_t$ (when the matrix $\Sigma$ depends only on $q_t$). 
This would correspond to the extra term appearing when converting between Stratonovich and It\^o stochastic integrals. 
\end{remark}

Using It\^o's lemma, one gets the trace formula for the energy of the above problem (see for instance \cite{bb14}):
\begin{equation}\label{etrace}
\E\left[H(p(t),q(t))\right]=\E\left[H(p_0,q_0)\right]+\frac12\Tr\left(\Sigma^\top\Sigma\right)t,
\end{equation}
for all times $t>0$.

We now want to design a numerical scheme having the same property. The idea we aim to present is inspired by the 
deterministic energy-preserving schemes from the literature (see \cite{ch11,h10,qm08} and references therein), 
able to provide the exact conservation of the energy of a given mechanical system. 
Energy-preserving methods represent a follow-up of the classical results on geometric numerical integration 
relying on the employ of symplectic Runge--Kutta methods, projection methods and nearly preserving integrators 
(see the comprehensive monograph \cite{haluwa} and references therein). In the general setting of deterministic B-series methods, 
a general proof for the existence of energy-preserving methods was given in \cite{fhp04} and practical examples of methods were first developed in \cite{qm08}. 

We propose the following time integrator for problem~\eqref{prob}, 
which is shown in Theorem~\ref{thmTrace} to be a \emph{drift-preserving integrator} for all times, 
\begin{equation}\label{dp}
\begin{split}
\Psi_{n+1}&=p_n+\Sigma\Delta W_n-\frac{h}2\int_0^1V'(q_n+sh\Psi_{n+1})\,\text ds,\\
q_{n+1}&=q_n+h\Psi_{n+1},\\
p_{n+1}&=p_n+\Sigma\Delta W_n-h\int_0^1V'(q_n+sh\Psi_{n+1})\,\text ds,
\end{split}
\end{equation}
where $h>0$ is the stepsize of the numerical scheme and $\Delta W_n$ are Wiener increments. 
Observe that the deterministic integral above needs to be computed exactly (as in the deterministic setting). 
This is not a problem for polynomial potentials $V$ for instance. We also observe that, 
as it happens for deterministic energy-preserving methods, the numerical scheme \eqref{dp} only requires the evaluation 
to the vector field of problem~\eqref{prob} in selected points, without requiring additional projection steps 
for the exact conservation of the trace formula \eqref{etrace}, that would inflate the computational cost of the procedure.

We would like to remark that the proposed drift-preserving scheme is not the only possibility 
to get a time integrator that exactly satisfy 
a trace formula for the energy for all times. Another possibility could be to use a splitting strategy. 
This idea is currently under investigation in~\cite{cv20}, see also Section~\ref{sec:numerics} below.

\begin{remark}
Since the proposed numerical scheme is implicit, we present two possible ways of showing its solvability. 

To show the solvability of the drift-preserving scheme~\eqref{dp}, we use the fixed point theorem.
Assuming that $V^{\prime}(x)$ is globally Lipschitz continuous, we define the function 
$$
F(\psi)=p_n+\Sigma \Delta W_n -\frac{h}{2}\int_0^1 V^{\prime}(q_n+sh \psi)\,\text ds.
$$
The solvability of the numerical integrator \eqref{dp} is thus equivalent to showing that the 
function $F$ is a contractive mapping. Since
\begin{equation*}
F(\psi_1)-F(\psi_2)=-\frac{h}{2}\int_0^1\Big( V^{\prime}(q_n+sh\psi_1)-V^{\prime}(q_n+sh\psi_2) \Big)\,\text ds,
\end{equation*}
then
\begin{equation*}
\begin{split}
|F(\psi_1)-F(\psi_2)| &\leq \frac{h}{2}\int_0^1\Big| V^{\prime}(q_n+sh\psi_1)-V^{\prime}(q_n+sh\psi_2) \Big|\,\text ds\\
&\leq \frac{h^2}{4}L|\psi_1-\psi_2|.
\end{split}
\end{equation*}
Therefore, there exists an $h^{*}=\sqrt{\frac{4}{L}}$ such that for all $h<h^*$, $F$ is a contractive mapping.

If $V\in {\mathcal C}^2$, we could also use the implicit function theorem instead. Indeed, let us define  
\[
G(\psi,h,\Delta W_n)=\psi-p_n-\Sigma \Delta W_n +\frac{h}{2}\int_0^1 V^{\prime}(q_n+sh \psi)\,\text ds.
\]
Then the solvability of $\psi$ for sufficiently small $h$ is equivalent in showing that 
\[
\Big|\nabla_{\psi}G(\psi,h,\Delta W_n)\Big|_{h=0}\Big|\neq 0.
\]
In fact 
\[
\nabla_{\psi}G(\psi,h,\Delta W_n)=\text{Id} +\frac{h^2}{2}\int_0^1 s V^{\prime\prime}(q_n+sh\psi)\,\text ds,
\]
so that 
\[
\Big|\nabla_{\psi}G(\psi,h,\Delta W_n)\Big|_{h=0}\Big|=|\text{Id}|=1\neq 0.
\]
By the implicit function theorem, we can thus conclude that for sufficiently small $h$, $\psi$ is solvable.
\end{remark}

We next show that the numerical scheme \eqref{dp} satisfies, for all times, the same trace formula for the energy as the exact solution to the SDE \eqref{prob}, i.\,e., that it is a drift-preserving integrator.  
\begin{theorem}\label{thmTrace}
Assume that $V\in\mathcal C^1$. Consider the numerical discretization of the stochastic Hamiltonian system with additive noise \eqref{prob} by 
the drift-preserving numerical scheme \eqref{dp}. Then the expected energy satisfies the following trace formula
\begin{equation}\label{ntrace}
\E\left[H(p_n,q_n)\right]=\E\left[H(p_0,q_0)\right]+\frac12\Tr\left(\Sigma^\top\Sigma\right)t_n
\end{equation}
for all discrete times $t_n=nh$, where $n\in\mathbb N$. Observe that $h$ is an arbitrary step size which is sufficiently 
small for the implicit numerical scheme to be solvable. 
\end{theorem}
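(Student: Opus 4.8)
The plan is to prove the one-step identity
\[
\E\left[H(p_{n+1},q_{n+1})\right]-\E\left[H(p_n,q_n)\right]=\frac12\Tr\left(\Sigma^\top\Sigma\right)h
\]
and then obtain \eqref{ntrace} by summing it over the first $n$ steps and using $t_n=nh$.

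First I would compute the pathwise one-step increment $H(p_{n+1},q_{n+1})-H(p_n,q_n)$ and split it into the kinetic part $\frac12\left(|p_{n+1}|^2-|p_n|^2\right)$ and the potential part $V(q_{n+1})-V(q_n)$. For the potential part I would integrate along the straight segment from $q_n$ to $q_{n+1}=q_n+h\Psi_{n+1}$, so that by the fundamental theorem of calculus (here $V\in\mathcal C^1$ suffices)
\[
V(q_{n+1})-V(q_n)=h\,\Psi_{n+1}^\top\int_0^1V'(q_n+sh\Psi_{n+1})\,\text ds;
\]
this is exactly the role the averaged vector field is designed to play in \eqref{dp}. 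For the kinetic part I would factor it as $\frac12\left(p_{n+1}-p_n\right)^\top\left(p_{n+1}+p_n\right)$ and insert the two relations read off from \eqref{dp}, namely $p_{n+1}-p_n=\Sigma\Delta W_n-h\int_0^1V'(q_n+sh\Psi_{n+1})\,\text ds$ together with the identity $\Psi_{n+1}=\frac12\left(p_n+p_{n+1}\right)+\frac12\Sigma\Delta W_n$, which follows directly from the first and third lines of \eqref{dp}.

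The key observation---and the structural reason the scheme is drift-preserving---is that once these relations are substituted, every contribution containing the integral $\int_0^1V'(q_n+sh\Psi_{n+1})\,\text ds$ cancels algebraically, and the increment collapses to the purely noise-driven expression
\[
H(p_{n+1},q_{n+1})-H(p_n,q_n)=\left(\Sigma\Delta W_n\right)^\top p_n+\frac12\left|\Sigma\Delta W_n\right|^2.
\]
I expect this exact pathwise cancellation to be the crux of the argument: the point is to arrange it \emph{before} taking any expectation, thereby avoiding having to evaluate moments of the implicitly defined quantity $\Psi_{n+1}$, which depends nonlinearly on $\Delta W_n$ through the fixed-point equation in \eqref{dp}.

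Finally I would take expectations of this clean expression. Since $p_n$ is $\mathcal F_{t_n}$-measurable while the increment $\Delta W_n$ is independent of $\mathcal F_{t_n}$ and has zero mean, conditioning on $\mathcal F_{t_n}$ gives $\E\left[\left(\Sigma\Delta W_n\right)^\top p_n\right]=0$. For the quadratic term I would use $\E\left[\Delta W_n\Delta W_n^\top\right]=h\,\text{Id}$ to obtain
\[
\E\left[\frac12\left|\Sigma\Delta W_n\right|^2\right]=\frac12\E\left[\Delta W_n^\top\Sigma^\top\Sigma\,\Delta W_n\right]=\frac12\Tr\left(\Sigma^\top\Sigma\right)h.
\]
This establishes the one-step identity, and telescoping from step $0$ to step $n$ yields the trace formula \eqref{ntrace}.
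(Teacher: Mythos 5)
Your proposal is correct and takes essentially the same route as the paper's proof: the exact fundamental-theorem-of-calculus identity $V(q_{n+1})-V(q_n)=h\Psi_{n+1}^\top\int_0^1 V'(q_n+sh\Psi_{n+1})\,\dd s$ exploiting the averaged-vector-field structure of \eqref{dp}, algebraic cancellation of all terms containing that integral, the mean-zero and covariance properties of the Wiener increments, and a final recursion/telescoping. The only difference is organizational: you obtain the cancellation pathwise (via the factorization $\frac12(p_{n+1}-p_n)^\top(p_{n+1}+p_n)$ and the relation $\Psi_{n+1}=\frac12(p_n+p_{n+1})+\frac12\Sigma\Delta W_n$) before taking any expectation, whereas the paper expands inside the expectation and discards the cross terms $p_n^\top\Sigma\Delta W_n$ along the way; both computations are valid and rest on the same structural observation.
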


\begin{proof}
Before we start, let us for convenience introduce the notation
$$
\text{Int}=\displaystyle\int_0^1V'(q_n+sh\Psi_{n+1})\,\text ds.
$$
Using the definitions of the Hamiltonian, of the numerical scheme and a Taylor expansion for the potential $V$ yields
\begin{align*}
\E\left[H(p_{n+1},q_{n+1})\right]&=\E\left[ \frac12p_{n+1}^\top p_{n+1}+V(q_{n+1}) \right]\\
&= 
\E\left[ \frac12\left( p_{n}+\Sigma\Delta W_n-h\text{Int} \right)^\top 
        \left( p_{n}+\Sigma\Delta W_n-h\text{Int} \right)
        +V(q_n)+h\Psi_{n+1}^\top\text{Int} \right] .
\end{align*}
By the definition of the numerical scheme and properties of the Wiener increments, one obtains further 
\begin{align*}
\E\left[H(p_{n+1},q_{n+1})\right]&=\E\left[\frac12\left(p_{n}^\top p_n-hp_{n}^\top\text{Int}+\left(\Sigma\Delta W_n\right)^\top\left(\Sigma\Delta W_n\right)-h(\Sigma\Delta W_n)^\top\text{Int}
-h\text{Int}^\top p_n \right.\right.\\
&\hspace*{4em}\left.\left.-h\text{Int}^\top\Sigma\Delta W_n+{h^2}\text{Int}^\top\text{Int}\right) \right. \\
&\hspace*{3em}\left.+V(q_n)+h p_{n}^\top\text{Int}+ h(\Sigma\Delta W_n)^\top\text{Int}-\frac{h^2}2\text{Int}^\top\text{Int} \right].
\end{align*}
Due to cancellations and thanks to properties of the Wiener increments, the expression above simplifies to
\begin{align*}
\E[H(p_{n+1},q_{n+1})]&=\E\left[\frac12 p_{n}^\top p_n+V(q_n)\right]+\frac12\Tr\left(\Sigma^\top\Sigma\right) h\\
&=\E[H(p_{n},q_{n})]+\frac12\Tr\left(\Sigma^\top\Sigma\right) h.
\end{align*}
A recursion now completes the proof.
\end{proof}

The above result can also be seen as a longtime weak convergence result and provides a longtime stability result 
(in a certain sense) for the drift-preserving numerical scheme \eqref{dp}. Already in the case of a quadratic Hamiltonian, 
such trace formulas are not valid for classical numerical schemes for SDEs, as seen in \cite{smh04} for instance and in the numerical experiments provided in Section~\ref{sec:numerics}.

\section{Mean-square convergence analysis}\label{sect-ms}
In this section, we assume that $V^{\prime}$ is a globally Lipschitz continuous function and consider a compact time 
interval $[0,T]$. 
Then from the standard analysis of SDEs, see for instance \cite[Theorems~4.5.3~and~4.5.4]{Kloeden1992}, 
we known that for all $t,s\in[0,T]$ one has
\begin{align*}
&{\mathbb E}\left[|p(t)|^2+|q(t)|^2\right]\leq C,\\
&{\mathbb E}\left[|q(t)-q(s)|^2\right]\leq C|t-s|^2,\\
&{\mathbb E}\left[|p(t)-p(s)|^2\right]\leq C|t-s|,
\end{align*}
where the constant $C$ may depend on the coefficients of the SDE \eqref{prob} and its initial values.

We now state the result on the mean-square convergence of the drift-preserving integrator \eqref{dp}.
\begin{theorem}\label{ms-order}
Consider the stochastic Hamiltonian problem with additive noise~\eqref{prob} on a fixed time interval $[0,T]$ 
and the drift-preserving integrator~\eqref{dp}. 
Assume that the potential $V^{\prime}$ is globally Lipschitz continuous. 
Then, there exists $h^{*}>0$ such that for all $0<h\leq h^{*}$, the numerical scheme converges 
with order~$1$ in mean-square, i.\,e.,
\[
\left({\mathbb E}\left[|q(t_n)-q_n|^2\right]\right)^{1/2}+\left({\mathbb E}\left[|p(t_n)-p_n|^2\right]\right)^{1/2}\leq Ch,
\]
where the constant $C$ may depend on the initial data, the end time $T$, and the coefficients of \eqref{prob}, 
but is independent of $h$ and $n$ for $n=1, \ldots, N$. Here, $N$ is an integer 
such that $Nh=T$.  
\end{theorem}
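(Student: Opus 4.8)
The plan is to deduce the global mean-square bound from a one-step error analysis via Milstein's fundamental theorem of mean-square convergence \cite[Chap.~1]{Milstein2004}. Writing $X_n=(q_n,p_n)$ and $X(t)=(q(t),p(t))$, that theorem guarantees that if, starting both the exact flow and a single step of \eqref{dp} from the same $\mathcal F_{t_n}$-measurable value $X_n$, the local error satisfies
\begin{align*}
\bigl|\E\left[X(t_{n+1})-X_{n+1}\right]\bigr| &\le C\bigl(1+|X_n|^2\bigr)^{1/2}h^{p_1},\\
\bigl(\E\left[|X(t_{n+1})-X_{n+1}|^2\right]\bigr)^{1/2} &\le C\bigl(1+|X_n|^2\bigr)^{1/2}h^{p_2},
\end{align*}
with $p_2\ge\tfrac12$ and $p_1\ge p_2+\tfrac12$, then the global mean-square order equals $p_2-\tfrac12$. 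It therefore suffices to establish the local estimates with $p_2=\tfrac32$ and $p_1=2$. The global Lipschitz assumption on $V'$ supplies the linear-growth bound required by the theorem, while the a priori moment and H\"older estimates recalled before the statement are used throughout the expansions below.

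Before estimating the local error I would control the implicit internal stage $\Psi_{n+1}$. For $0<h\le h^*$ the contraction argument of the solvability remark makes $\Psi_{n+1}$ well defined, and comparing it with $p_n+\Sigma\Delta W_n$ gives the uniform bound $\E[|\Psi_{n+1}|^2]\le C$ together with the expansion
$$\Psi_{n+1}=p_n+\Sigma\Delta W_n-\tfrac h2 V'(q_n)+\rho_n,\qquad\bigl(\E[|\rho_n|^2]\bigr)^{1/2}\le Ch^2.$$
This follows because the integral in \eqref{dp} differs from $V'(q_n)$ by $\int_0^1\bigl(V'(q_n+sh\Psi_{n+1})-V'(q_n)\bigr)\,\mathrm{d}s$, which is bounded in mean square by $\tfrac{Lh}{2}\bigl(\E[|\Psi_{n+1}|^2]\bigr)^{1/2}=\bigo{h}$. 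The decisive structural point is that, the noise being additive, the scheme uses exactly $\Delta W_n=W(t_{n+1})-W(t_n)$, so the martingale parts of the exact and numerical steps coincide and the order is dictated entirely by the drift.

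The core of the argument is the local error. Iterating \eqref{prob} over $[t_n,t_{n+1}]$ and using $\mathrm{d}q=p\,\mathrm{d}t$ together with the Lipschitz bound on $V'$ and $\E[|q(s)-q(t_n)|^2]\le C|s-t_n|^2$ yields
\begin{align*}
q(t_{n+1}) &= q_n+hp_n-\tfrac{h^2}2V'(q_n)+\Sigma\int_{t_n}^{t_{n+1}}\bigl(W(s)-W(t_n)\bigr)\,\mathrm{d}s+\bigo{h^3},\\
p(t_{n+1}) &= p_n-hV'(q_n)+\Sigma\Delta W_n+\bigo{h^2},
\end{align*}
with the remainders measured in mean square. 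Substituting the expansion of $\Psi_{n+1}$ into \eqref{dp} gives $q_{n+1}=q_n+hp_n+h\Sigma\Delta W_n-\tfrac{h^2}2V'(q_n)+\bigo{h^3}$ and $p_{n+1}=p_n-hV'(q_n)+\Sigma\Delta W_n+\bigo{h^2}$. Hence the $p$-component of the local error is $\bigo{h^2}$ in mean square, whereas the leading $q$-component is the centered quantity $\Sigma\bigl(\int_{t_n}^{t_{n+1}}(W(s)-W(t_n))\,\mathrm{d}s-h\Delta W_n\bigr)$, whose second moment is computed explicitly to be $\tfrac13\Tr(\Sigma^\top\Sigma)h^3$. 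Consequently $p_2=\tfrac32$, the $q$-component being the binding one; moreover every $\bigo{h^{3/2}}$ term is a mean-zero Gaussian, so $\E[q(t_{n+1})-q_{n+1}]=\bigo{h^3}$ and $\E[p(t_{n+1})-p_{n+1}]=\bigo{h^2}$, giving $p_1=2$.

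With $p_2=\tfrac32$ and $p_1=2$ the balance condition $p_1\ge p_2+\tfrac12$ holds with equality, and Milstein's fundamental theorem delivers the global mean-square order $p_2-\tfrac12=1$, which is the assertion of Theorem~\ref{ms-order}. I expect the principal obstacle to be the rigorous handling of the implicit stage: one must propagate the mean-square remainder through the fixed-point relation and verify that the moment bounds on $X_n$ and $\Psi_{n+1}$ are uniform in $n$ and $h$ for $h\le h^*$, all using only that $V'$ is Lipschitz, so that no second derivative of $V$ is available in the expansions. A secondary but essential point is confirming that the diffusion contribution to $q$, which at first sight is only $\bigo{h^{1/2}}$, is in reality $\bigo{h^{3/2}}$ thanks to the exact cancellation of the additive increments; this is precisely what upgrades the generic strong order $\tfrac12$ to strong order $1$.
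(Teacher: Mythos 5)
Your proposal is correct and takes essentially the same route as the paper: both rest on Milstein's fundamental mean-square convergence theorem with local orders $p_1=2$ and $p_2=3/2$, and both hinge on the same structural cancellation, namely that the noise contribution to the $q$-component of the local error reduces to the mean-zero It\^o integral $-\Sigma\int_0^h t\,\mathrm{d}W(t)$ of mean-square size $\mathcal{O}(h^{3/2})$, while the $p$-component is $\mathcal{O}(h^{2})$. The only cosmetic difference is that you cancel the $\tfrac{h^2}{2}V'(q_n)$ terms by Taylor-expanding both flows around $(q_n,p_n)$, whereas the paper bounds the corresponding drift integrals separately via linear growth; both arguments deliver the same rates.
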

\begin{proof}
We base the proof of this result on Milstein's fundamental convergence theorem, 
see \cite[Theorem~1.1]{Milstein2004} and first consider one step in the approximation 
of the numerical scheme~\eqref{dp}, starting from the point $(p, q)$ at time $t=0$ 
to $(\overline{p}, \overline{q})$ at time $t=h$:
\begin{equation}\label{os-dp}
\begin{split}
&\overline{\psi}=p+\Sigma W(h)-\frac{h}{2} \int_0^1 V^{\prime}(q+sh\overline{\psi})\,\text ds,\\
&\overline{q}=q+h\overline{\psi},\\
&\overline{p}=p+\Sigma W(h)-h \int_0^1 V^{\prime}(q+sh\overline{\psi})\,\text ds.
\end{split}
\end{equation}
Let us introduce a similar notation for the exact solution of \eqref{prob} starting from the point $(p, q)$ at time $t=0$ to $(p(h),q(h))$ at time $t=h$:
\begin{equation}\label{os-es}
\begin{split}
p(h)=&p-\int_0^h V^{\prime}(q(t))\,\text dt+\Sigma W(h),\\
q(h)=&q+\int_0^h p(t)\,\text dt=q+\int_0^h\Big[p-\int_0^t V^{\prime}(q(s))\,\text ds+\Sigma W(t)\Big]\, \text dt\\
=&q+h\Big[p-\frac{1}{h}\int_0^h\int_0^t V^{\prime}(q(s))\,\text ds\,\text dt+\frac{1}{h}\Sigma\int_0^h W(t)\,\text dt \Big]\\
=&q+h \psi(h).
\end{split}
\end{equation}

Finally, we define the local errors $e_{\text{loc}}^{q}=q(h)-\overline{q}$ and $e_{\text{loc}}^{p}=p(h)-\overline{p}$. 
In order to use Milstein's fundamental convergence theorem in our situation, one has to show that
\begin{align}
&\left|{\mathbb E}\left[e_{\text{loc}}^{q}\right]\right|=\bigo{h^2},\qquad \left({\mathbb E}\left[|e_{\text{loc}}^{q}|^2\right]\right)^{1/2}=\bigo{h^{3/2}}\label{es-q},\\
&\left|{\mathbb E}\left[e_{\text{loc}}^{p}\right]\right|=\bigo{h^2},\qquad \left({\mathbb E}\left[|e_{\text{loc}}^{p}|^2\right]\right)^{1/2}=\bigo{h^{3/2}}.\label{es-p}
\end{align}
It turns out that, due to the particular form of \eqref{prob}, 
we actually achieve better rates for the second term in \eqref{es-p}.
Using \eqref{os-dp} and \eqref{os-es}, we obtain
\begin{equation*}
\begin{split}
e_{\text{loc}}^{q}&=h(\psi(h)-\overline{\psi})\\
&=\Big(\Sigma \int_0^h W(t)\,\text dt-h\Sigma W(h)\Big)
-\Big( \int_0^h\int_0^t V^{\prime}(q(s))\,\text ds\,\text dt-\frac{h^2}{2}\int_0^1 V^{\prime}(q+sh\overline{\psi})\,\text ds \Big)\\
&=e_{\text{loc}}^{q,1}-e_{\text{loc}}^{q,2}.
\end{split}
\end{equation*}
Noting that 
\[
hW(h)=\int_0^h W(t)\,\text dt+\int_0^h t\,\text dW(t),
\]
one gets that
\[
e_{\text{loc}}^{q,1}=-\Sigma \int_0^h t\,\text dW(t).
\]
Hence by the property of It\^o integral, we have
\[
{\mathbb E}\left[e_{\text{loc}}^{q,1}\right]=0
\]
and
\[
{\mathbb E}\left[\left| e_{\text{loc}}^{q,1} \right|^2\right]={\mathbb E}\left[\left|\Sigma \int_0^h t\,\text dW(t) \right|^2\right]
=\sum_{i=1}^{m}{\mathbb E}\left[\left| \sum_{j=1}^{d} \sigma_{ij}\int_0^h t\,\text dW_{j}(t) \right|^2\right]
=\sum_{i=1}^{m}\sum_{j=1}^{d}(\sigma_{ij})^2\int_0^h t^2\,\text dt=\frac{h^3}{3}\left|\Sigma\right|_{F}^2, 
\]
where $\left|\cdot\right|_{F}$ denotes the Frobenius norm of a matrix. 
For the term $e_{\text{loc}}^{q,2}$, using H\"older's inequality and the linear growth condition on $V^{\prime}$, we obtain 
\begin{align*}
{\mathbb E}\left[\left| e_{\text{loc}}^{q,2}\right|^2\right]&\leq 2{\mathbb E}\left[\left| \int_0^h\int_0^t V^{\prime}(q(s))\,\text ds\,\text dt\right|^2\right]
+2{\mathbb E}\left[\left|\frac{h^2}{2}\int_0^1 V^{\prime}(q+sh\overline{\psi})\,\text ds \right|^2\right]\\
&\leq C_1h\int_0^h t\int_0^t {\mathbb E}\left[|V^{\prime}(q(s))|^2\right]\,\text ds\,\text dt
+C_2h^4\int_0^1 {\mathbb E}\left[|q+sh\overline{\psi}|^2\right]\,\text ds\\
&\leq C_1 h\int_0^h t\int_0^t (1+{\mathbb E}\left[|q(s)|^2\right])\,\text ds\,\text dt
+C_2h^4 \int_0^1 (1+{\mathbb E}\left[|q+sh\overline{\psi}|^2\right])\,\text ds\\
&\leq Ch^4.
\end{align*}
Here in the last step, we used the classical bounds from the beginning of this section, 
$$
{\mathbb E}\left[|q(s)|^2+|p(s)|^2\right]\leq C\left(1+{\mathbb E}\left[|q|^2\right]+{\mathbb E}\left[|p|^2\right]\right),
$$
and
\begin{equation}\label{psi}
{\mathbb E}\left[|\overline{\psi}|^2+|\overline{q}|^2+|\overline{p}|^2\right]\leq C\left(1+{\mathbb E}\left[|q|^2\right]+{\mathbb E}\left[|p|^2\right]\right).
\end{equation}
The last inequality comes from the fact that, from \eqref{os-dp}, we have 
\begin{align*}
{\mathbb E}\left[|\overline{\psi}|^2\right]&\leq C\left({\mathbb E}\left[|p|^2\right] 
+{\mathbb E}\left[|\Sigma W(h)|^2\right] +h^2 \int_0^1 {\mathbb E}\left[|V^{\prime}(q+sh\overline{\psi})|^2\right]\,
\text ds\right)\\
&\leq C_1{\mathbb E}\left[|p|^2\right]+C_2h+C_3h^2\left(1+{\mathbb E}\left[|q|^2\right]
+h^2 {\mathbb E}\left[|\overline{\psi}|^2\right]\right).
\end{align*}
This implies that there exists an $h^{*}>0$ such that for all $0<h\leq h^{*}$, inequality \eqref{psi} is verified.

Therefore, we obtain the following bounds $\left|{\mathbb E}\left[e_{\text{loc}}^{q,2}\right]\right|\leq \left({\mathbb E}\left[|e_{\text{loc}}^{q,2}|^2\right]\right)^{1/2}\leq Ch^2$ 
and hence we proved assertion \eqref{es-q}.

For the estimate of $e_{\text{loc}}^{p}$, we get from \eqref{os-dp} and \eqref{os-es} that
\begin{equation*}
e_{\text{loc}}^{p}=-\int_0^h V^{\prime}(q(t))\,\text dt+h\int_0^1 V^{\prime}(q+sh\overline{\psi})\,\text ds.
\end{equation*}
Then using that $V^{\prime}$ is globally Lipschitz continuous, we obtain
\begin{align*}
{\mathbb E}\left[|e_{\text{loc}}^{p}|^2\right]&={\mathbb E}\left[\left| h\int_0^1\left( V^{\prime}(q(hs))- V^{\prime}(q+sh\overline{\psi})\right)\,\text ds \right|^2\right]\\
&\leq h^2\int_0^1 {\mathbb E}\left[|V^{\prime}(q(hs))-V^{\prime}(q+sh\overline{\psi})|^2\right]\,\text ds\\
&\leq Ch^2\int_0^1\left( {\mathbb E}\left[|q(hs)-q|^2\right]+s^2h^2{\mathbb E}\left[|\overline{\psi}|^2\right] \right)\,\text ds\\
&\leq Ch^4.
\end{align*}
In the last step, we use the fact that 
\[
{\mathbb E}\left[|q(hs)-q|^2\right]={\mathbb E}\left[\left|\int_0^{hs}p(r)\,\text dr\right|^2\right]\leq hs \int_0^{hs}{\mathbb E}\left[|p(r)|^2\right]\,\text dr
\leq Ch^2.
\]
Therefore we obtain the bounds $\left|{\mathbb E}\left[e_{\text{loc}}^{p}\right]\right|\leq \left({\mathbb E}\left[|e_{\text{loc}}^{p}|^2\right]\right)^{1/2}\leq Ch^2$. 
An application of Milstein's fundamental convergence theorem completes the proof.
\end{proof}

\section{Weak convergence analysis}\label{sect-weak}

The weak error analysis for the Hamiltonian functional in the given situation is a direct consequence of the preceding sections. 
For the considered SDE, the main quantity of interest is given by a specific test function, i.\,e., we are interested in computing
\begin{equation*}
 \E\left[H(p(t),q(t))\right],
\end{equation*}
for $t>0$. Equation~\eqref{etrace} and Theorem~\ref{thmTrace} imply for all discrete times $t_n = nh$ that
\begin{equation*}
 \left| \E\left[H(p(t_n),q(t_n))\right]
            - \E\left[H(p_n,q_n)\right] \right|
    = 0.
\end{equation*}
For fixed $h>0$ let us further extend the discrete processes $(p_n, n=0,\ldots,N)$ and $(q_n, n=0,\ldots,N)$ to an adapted process for all $t>0$ by setting
\begin{equation*}
 p_h(t) = p_n,
 \qquad
 q_h(t) = q_n,
\end{equation*}
for $t \in [t_n,t_{n+1})$. Then $p_h$ and $q_h$ are piecewise continuous adapted processes which satisfy 
\begin{equation*}
  \left| \E\left[H(p(t),q(t))\right]
            - \E\left[H(p_h(t),q_h(t))\right] \right|
    = \frac{1}{2}\Tr\left(\Sigma^\top\Sigma\right)(t - t_n)
    \le \frac{1}{2} \Tr\left(\Sigma^\top\Sigma\right) h.
\end{equation*}
In conclusion using the trace formulas~\eqref{etrace} and~\eqref{ntrace} we have just shown the following corollary.

\begin{corollary}\label{cor:weakConv}
Assume that $V\in\mathcal C^1$. Consider the numerical discretization of the stochastic Hamiltonian system with additive noise~\eqref{prob} by 
the drift-preserving numerical scheme~\eqref{dp}, then the weak error in the Hamiltonian is bounded by
\begin{equation*}
  \left| \E\left[H(p(t),q(t))\right]
            - \E\left[H(p_h(t),q_h(t))\right] \right|
    \le \frac{1}{2} \Tr\left(\Sigma^\top\Sigma\right) h,
\end{equation*}
for all $t>0$. More specifically, for all $t = t_n$, with $n=0,\ldots,N$, the error is zero and the approximation scheme is preserving the quantity of interest.
\end{corollary}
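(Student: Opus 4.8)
The plan is to obtain the result as an immediate consequence of the two trace formulas already at hand, namely the exact formula~\eqref{etrace} and the discrete formula~\eqref{ntrace} established in Theorem~\ref{thmTrace}. Both expressions share the identical initial term $\E[H(p_0,q_0)]$ and a linear drift $\frac12\Tr(\Sigma^\top\Sigma)$ multiplied by time; they differ only in whether that drift is evaluated at the continuous time $t$ or at the grid point $t_n$. The entire argument therefore reduces to a careful bookkeeping of this single time difference.

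First I would dispose of the exact grid points $t=t_n$. Subtracting~\eqref{ntrace} from~\eqref{etrace} at $t=t_n$, the initial energies cancel and the two drift contributions coincide, both being $\frac12\Tr(\Sigma^\top\Sigma)t_n$. Hence the difference is identically zero, which establishes the sharper claim that the scheme reproduces the quantity of interest exactly on the grid.

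Next, for an arbitrary $t>0$, I would fix the unique index $n$ with $t\in[t_n,t_{n+1})$ and invoke the definition of the piecewise-constant extension $p_h(t)=p_n$ and $q_h(t)=q_n$. This yields $\E[H(p_h(t),q_h(t))]=\E[H(p_n,q_n)]=\E[H(p_0,q_0)]+\frac12\Tr(\Sigma^\top\Sigma)t_n$ by~\eqref{ntrace}, while~\eqref{etrace} supplies $\E[H(p(t),q(t))]=\E[H(p_0,q_0)]+\frac12\Tr(\Sigma^\top\Sigma)t$. Subtracting, the difference collapses to $\frac12\Tr(\Sigma^\top\Sigma)(t-t_n)$, and the stated bound follows from $0\le t-t_n<h$ together with the fact that $\Tr(\Sigma^\top\Sigma)\ge 0$ since $\Sigma^\top\Sigma$ is positive semidefinite.

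I do not anticipate a genuine obstacle here: all of the analytic effort resides in establishing the two trace formulas, and the corollary is essentially their difference. The only points requiring mild care are verifying that the extension is well defined on each half-open subinterval $[t_n,t_{n+1})$ and observing that the drift coefficient is nonnegative, so that the signed time gap $t-t_n$ directly controls the absolute weak error without any sign subtleties.
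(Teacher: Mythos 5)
Your proposal is correct and follows essentially the same route as the paper: both arguments subtract the discrete trace formula~\eqref{ntrace} from the exact one~\eqref{etrace}, note the exact cancellation at grid points $t_n$, and for general $t$ use the piecewise-constant extension to reduce the error to $\frac12\Tr\left(\Sigma^\top\Sigma\right)(t-t_n)\le\frac12\Tr\left(\Sigma^\top\Sigma\right)h$. Your additional remarks on the nonnegativity of $\Tr\left(\Sigma^\top\Sigma\right)$ and the well-definedness of the extension are small points of care the paper leaves implicit, but they do not constitute a different argument.
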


Let us next consider the weak error for other test functions than~$H$ and introduce for convenience the space of square integrable random variables
\begin{equation*}
L^2(\Omega)
=
\left\{
v:\Omega \rightarrow \R, \, v \text{ strongly measurable}, \,
\| v \|_{L^2(\Omega)} < +\infty
\right\} ,
\end{equation*}
where
\begin{equation*}
\| v \|_{L^2(\Omega)}
=
\E [ |v|^2]^{1/2}.
\end{equation*}
In the following result on weak convergence, we obtain the same rate as for strong convergence in Section~\ref{sect-ms} and will see in the examples in Section~\ref{sec:numerics} that this seems optimal. This is caused by our combination of smoothness assumptions and the additive noise.

\begin{theorem}\label{weak-order}
Assume that $V'$ is globally Lipschitz continuous.
Then the drift preserving scheme~\eqref{dp} converges weakly with order~$1$ to the solution of~\eqref{prob} 
on any finite time interval~$[0,T]$. More specifically, for all differentiable test functions $f: \R^{2d} \rightarrow \R$ with $f'$ of polynomial growth there exist $C>0$ and $h^\ast > 0$ such that for all $0<h \le h^\ast$ and corresponding $n=1,\ldots,N$ with $hN = T$
\begin{equation*}
   \left| \E\left[f(p(t_n),q(t_n))\right]
            - \E\left[f(p_n,q_n)\right] \right|
    \le C h.
\end{equation*}
\end{theorem}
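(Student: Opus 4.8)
The plan is to reduce the weak error directly to the mean-square error already established in Theorem~\ref{ms-order}. This reduction is what forces the weak order to coincide with the strong order rather than doubling it: because the only regularity assumed on the test function is differentiability with $f'$ of polynomial growth, I cannot run a Talay--Tubaro-type expansion of the one-step error, and the crude mean-value bound below only recovers order~$1$. Writing $X(t_n)=(p(t_n),q(t_n))$ and $X_n=(p_n,q_n)$, I would first use the triangle (Jensen) inequality,
\[
\left| \E\left[f(X(t_n))\right]-\E\left[f(X_n)\right] \right|
\le \E\left[\left|f(X(t_n))-f(X_n)\right|\right],
\]
and then the mean value theorem in integral form,
\[
f(X(t_n))-f(X_n)=\int_0^1 f'\bigl(X_n+\tau(X(t_n)-X_n)\bigr)\,\text d\tau\cdot(X(t_n)-X_n).
\]

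The next step is to apply the Cauchy--Schwarz inequality to split the gradient factor from the increment:
\[
\E\left[\left|f(X(t_n))-f(X_n)\right|\right]
\le
\left(\E\left[\left|\int_0^1 f'\bigl(X_n+\tau(X(t_n)-X_n)\bigr)\,\text d\tau\right|^2\right]\right)^{1/2}
\left(\E\left[\left|X(t_n)-X_n\right|^2\right]\right)^{1/2}.
\]
The second factor is precisely the mean-square error, which is $\bigo{h}$ by Theorem~\ref{ms-order}. For the first factor I would invoke the polynomial growth of $f'$: if $|f'(x)|\le C(1+|x|^k)$, then the integrand is dominated by $C(1+|X_n|^k+|X(t_n)|^k)$, so that the first factor is controlled by $C\bigl(1+\E[|X_n|^{2k}]+\E[|X(t_n)|^{2k}]\bigr)^{1/2}$.

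Thus everything reduces to uniform-in-$(h,n)$ bounds on the moments $\E[|X_n|^{2k}]$ of the numerical solution; the corresponding bounds for the exact solution $X(t_n)$ are standard SDE estimates, as recalled at the beginning of Section~\ref{sect-ms}. This is where the genuine work lies, and I expect it to be the main obstacle. The strong convergence proof only used the one-step second-moment estimate~\eqref{psi} \emph{locally}, whereas here I need the moments to remain bounded over the whole range $n=1,\ldots,N$. I would therefore sharpen the computation behind~\eqref{psi} into a one-step recursion of the form $\E[|X_{n+1}|^{2k}]\le(1+Ch)\E[|X_n|^{2k}]+Ch$, valid for $h\le h^\ast$. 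The $(1+Ch)$ factor (rather than a fixed multiplicative constant) is obtained by expanding $|X_{n+1}|^{2k}$ via~\eqref{os-dp}, using that the increment $\Sigma W(h)$ is centered and independent of $\mathcal F_{t_n}$ so that its cross terms either vanish in expectation or contribute only $\bigo{h}$, and using the linear growth of $V'$ together with the Gaussian moments of the increment; the implicitly defined $\overline\psi$ appearing on the right-hand side is absorbed for $h$ small enough. A discrete Gronwall argument over $n=1,\ldots,N$ with $Nh=T$ then yields $\E[|X_n|^{2k}]\le C$ uniformly in $h$ and $n$. Feeding the resulting bounded first factor and the $\bigo{h}$ second factor back into the Cauchy--Schwarz estimate gives the asserted weak order~$1$.
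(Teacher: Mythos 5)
Your proposal is correct and follows essentially the same route as the paper's proof: the mean value theorem in integral form plus Cauchy--Schwarz, the strong-error bound from Theorem~\ref{ms-order} for the increment factor, and the polynomial growth of $f'$ together with uniform $2k$-th moment bounds of the exact and numerical solutions for the gradient factor. The only (minor) difference is that you propose to prove the uniform moment bound for the numerical solution by hand via a one-step recursion and discrete Gronwall argument, whereas the paper simply cites Lemma~2.2.2 of \cite{Milstein2004} for this (and Theorem~4.5.4 of \cite{Kloeden1992} for the exact solution), with the same restriction on $h^\ast$.
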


\begin{proof}
If $f$ is globally Lipschitz continuous, the claim follows directly from Theorem~\ref{ms-order}. 

Else let us use the abbreviations $X(t_n) = (p(t_n),q(t_n))$ and $X_n = (p_n,q_n)$. Then the mean value theorem and the Cauchy--Schwarz inequality imply that
\begin{align*}
 \left| \E\left[f(X(t_n))\right]
            - \E\left[f(X_n)\right] \right|
    & = \left| \E\left[\int_0^1 f'(X_n + s(X(t_n) - X_n)) \, \dd s \cdot (X(t_n) - X_n)\right] \right|\\
    & \le \int_0^1 \|f'(sX(t_n) + (1-s)X_n)\|_{L^2(\Omega)} \|X(t_n) - X_n\|_{L^2(\Omega)} \, \dd s.
\end{align*}
While the second term in the product is bounded by
\begin{equation*}
 \|X(t_n) - X_n\|_{L^2(\Omega)} \le C h
\end{equation*}
by Theorem~\ref{ms-order}, we use the polynomial growth assumption on~$f'$ for the first term to obtain
\begin{equation*}
 \int_0^1 \|f'(sX(t_n) + (1-s)X_n)\|_{L^2(\Omega)} \, \dd s
    \le C \left( 1 + 2^{(2m-1)/2} (2m+1)^{-1/2}
        \left( \E[|X(t_n)|^{2m}] + \E[|X_n|^{2m}] \right)^{1/2} \right).
\end{equation*}
The solution to~\eqref{prob} has bounded $2m$-th moment by Theorem~4.5.4 in~\cite{Kloeden1992}. The corresponding boundedness of the numerical solution follows by Lemma~2.2.2 in~\cite{Milstein2004} for $h$ sufficiently small depending on the polynomial growth constant on~$f'$. Choosing $h^\ast$ to be the minimum of this restriction and the one from Theorem~\ref{ms-order}, we conclude the proof.
\end{proof}

\begin{remark}
As we will see in Section~\ref{sec:numerics}, there exist combinations of equations and test functions for which the weak order of convergence of the drift-preserving scheme is in fact~$2$. More specifically, we observe this behaviour in simulations for the identity as test function. To understand this faster convergence, let us first consider the expectation of the stochastic 
Hamiltonian system \eqref{prob} 
\begin{align*}
\frac{\text d}{\text dt}\E[q(t)] &=\E[p(t)]\\
\frac{\text d}{\text dt}\E[p(t)] &=-\E[V'(q(t))]
\end{align*}
and apply the classical averaged vector field integrator (see e.g. \cite[Eq. (1.2)]{ch11}) 
to this ordinary differential equation. This time integrator is known to be of (deterministic) order of convergence $2$. 
We obtain the following numerical scheme
\begin{align*}
\E[q_{n+1}]&=\E[q_n]+\frac{h}{2}\left(\E[p_n]+\E[p_{n+1}]\right)\\
\E[p_{n+1}]&=\E[p_n]-h\int_0^1\E[V'((1-s)q_n+sq_{n+1})]\,\text ds.
\end{align*}
The above numerical scheme is nothing else than the expectation of the drift-preserving scheme \eqref{dp}
\begin{align*}
\E[\Psi_{n+1}]&=\E[p_n]-\frac{h}2\int_0^1\E[V'(q_n+sh\Psi_{n+1})]\,\text ds\\
\E[q_{n+1}]&=\E[q_n]+h\E[\Psi_{n+1}]\\
\E[p_{n+1}]&=\E[p_n]-h\int_0^1\E[V'(q_n+sh\Psi_{n+1})]\,\text ds=2\E[\Psi_{n+1}]-\E[p_n].
\end{align*}
We can thus conclude that the weak order of convergence in the first moment of the drift-preserving scheme is $2$. 
\end{remark}

In general, we will not be able to compute $\E\left[f(p_n,q_n)\right]$ analytically but have to approximate the expectation. This can for example be done by a standard Monte Carlo method. Following closely~\cite{L15}, we denote by 
\begin{equation*}
 E_M[Y] = M^{-1} \sum_{i=1}^M \hat{Y}^i
\end{equation*}
the Monte Carlo estimator of a random variable~$Y$ based on $M$ independent, identically distributed random variables~$\hat{Y}^i$. 
It is well-known that $E_M[Y]$ converges $\mathbb P$-almost surely to $\E[Y]$ for $M \rightarrow +\infty$, by the strong law of large numbers. Furthermore, it converges in mean square if $Y$ is square integrable and satisfies
\begin{equation*}
\| \E[Y] - E_M[Y] \|_{L^2(\Omega)}
  = M^{-1/2} \, {\mathsf{Var}}[Y]^{1/2}
  \leq M^{-1/2} \, \| Y \|_{L^2(\Omega)}.
\end{equation*}
By Corollary~\ref{cor:weakConv} for the Hamiltonian and the splitting of the error into the weak error in Theorem~\ref{weak-order} and the Monte Carlo error we therefore obtain the following lemma.

\begin{lemma}
 Assume that $V\in\mathcal C^1$. Consider the numerical discretization of the stochastic Hamiltonian system with additive noise~\eqref{prob} by 
the drift-preserving numerical scheme~\eqref{dp}, then the single level Monte Carlo error is bounded by
\begin{equation*}
  \left\| \E\left[H(p(t_n),q(t_n))\right]
            - E_M\left[H(p_n,q_n)\right] \right\|_{L^2(\Omega)}
    = M^{-1/2} \, {\mathsf{Var}}[H(p_n,q_n)]^{1/2}
    \le M^{-1/2} \, \|H(p_n,q_n)\|_{L^2(\Omega)},
\end{equation*}
for all $t_n = nh$. For any test function~$f$ under the assumptions of Theorem~\ref{weak-order}, this result generalizes 
on any finite time interval to
\begin{align*}
  \left\| \E\left[f(p(t_n),q(t_n))\right]
            - E_M\left[f(p_n,q_n)\right] \right\|_{L^2(\Omega)}
    & = C h + M^{-1/2} \, {\mathsf{Var}}[f(p(t_n),q(t_n))]^{1/2}\\
    &\le C h + M^{-1/2} \, \|f(p(t_n),q(t_n))\|_{L^2(\Omega)}
\end{align*}
with computational work for balanced error contributions
\begin{equation*}
 \cW = h^{-1} \cdot M^2
    = h^{-3}.
\end{equation*}
\end{lemma}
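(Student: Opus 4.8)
The plan is to split the total mean-square error into a deterministic bias (the weak error of the scheme) and a statistical Monte Carlo error by the triangle inequality, to bound each contribution with a result already established above, and finally to read off the computational work by balancing the two contributions.

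First, for the Hamiltonian test function $H$ I would write
\[
\left\| \E[H(p(t_n),q(t_n))] - E_M[H(p_n,q_n)] \right\|_{L^2(\Omega)}
\le
\left| \E[H(p(t_n),q(t_n))] - \E[H(p_n,q_n)] \right|
+
\left\| \E[H(p_n,q_n)] - E_M[H(p_n,q_n)] \right\|_{L^2(\Omega)}.
\]
The first term on the right is precisely the weak error in the Hamiltonian, which by Corollary~\ref{cor:weakConv} vanishes identically at every discrete time $t_n = nh$; hence the bias drops out completely and the error is purely statistical. To the second term I apply the standard Monte Carlo identity recalled immediately before the lemma, with $Y = H(p_n,q_n)$, obtaining the stated value $M^{-1/2}\mathsf{Var}[H(p_n,q_n)]^{1/2}$ together with the further bound $M^{-1/2}\|H(p_n,q_n)\|_{L^2(\Omega)}$. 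Square-integrability of $H(p_n,q_n)$, needed for this last step, follows from the moment bounds on the numerical solution already used in the proof of Theorem~\ref{weak-order}.

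For a general test function $f$ satisfying the hypotheses of Theorem~\ref{weak-order}, I would perform the same decomposition but now estimate the bias term by $Ch$ directly from Theorem~\ref{weak-order}, while the statistical term is again handled by the Monte Carlo identity applied to $Y = f(p_n,q_n)$. The one point to watch is that the estimator is built from the numerical evaluation $f(p_n,q_n)$, whereas the statement phrases the variance through $f(p(t_n),q(t_n))$; I would reconcile this by noting that $f(p_n,q_n) \to f(p(t_n),q(t_n))$ in $L^2(\Omega)$ (using the mean-square rate of Theorem~\ref{ms-order} together with the polynomial growth of $f'$ and the moment bounds, exactly as in the proof of Theorem~\ref{weak-order}), so that the two variances agree up to lower-order corrections absorbed into the $Ch$ term.

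Finally, for the computational work I would balance the two error contributions, choosing $M$ so that the statistical error $M^{-1/2}$ matches the bias $Ch$, which forces $M \sim h^{-2}$. Since one realization of the scheme~\eqref{dp} on $[0,T]$ with $T = Nh$ costs $\bigo{h^{-1}}$ time steps, the total work is the number of samples times the per-sample cost, giving $\cW = \bigo{h^{-3}}$, in agreement with the displayed estimate. I expect essentially no deep obstacle here: everything reduces to the textbook bias--variance decomposition, the genuinely new ingredient being the exact vanishing of the bias for $H$ via Corollary~\ref{cor:weakConv}, so that accuracy in the expected energy is limited solely by sampling. The only real bookkeeping is the square-integrability and moment control of the test-function evaluations invoked above.
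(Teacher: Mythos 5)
Your proposal is correct and follows essentially the same route as the paper: the lemma there is obtained exactly by splitting the error into the weak-error bias (which vanishes identically for $H$ by Corollary~\ref{cor:weakConv} and is $\bigo{h}$ for general $f$ by Theorem~\ref{weak-order}) plus the standard Monte Carlo statistical error, with the work count $h^{-1}\cdot M \sim h^{-3}$ from balancing $M^{-1/2}\sim h$. Your extra care in reconciling $\mathsf{Var}[f(p_n,q_n)]$ with $\mathsf{Var}[f(p(t_n),q(t_n))]$ via the mean-square convergence of Theorem~\ref{ms-order} is a detail the paper glosses over, but it is consistent with its argument.
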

For completeness of presentation of the numerical analysis of the drift-preserving scheme \eqref{dp}, 
although not necessarily relevant for Hamiltonian SDEs, let us look at the savings in computational costs 
of the multilevel Monte Carlo estimator~\cite{H01, G08}. 
Therefore we assume that $(Y_\ell, \ell \in \N_0)$ is a sequence of approximations of~$Y$. For given $L \in \N_0$, it holds that
\begin{equation*}
 Y_L = Y_0 + \sum_{\ell = 1}^L (Y_\ell - Y_{\ell - 1})
\end{equation*}
and due to the linearity of the expectation that
\begin{equation*}
  \E[Y_L] = \E[Y_0] + \sum_{\ell = 1}^L \E[Y_\ell - Y_{\ell - 1}].
\end{equation*}
A possible way to approximate $\E[Y_L]$ is to approximate $\E[Y_\ell - Y_{\ell - 1}]$ with the corresponding Monte Carlo estimator $E_{M_\ell}[Y_\ell - Y_{\ell - 1}]$ with a number of independent samples~$M_\ell$ depending on the level~$\ell$. We set
\begin{equation*}
 E^L[Y_L]
    = E_{M_0}[Y_0] + \sum_{\ell=1}^L E_{M_\ell}[Y_\ell - Y_{\ell - 1}]
\end{equation*}
and call $E^L[Y_L]$ the \emph{multilevel Monte Carlo estimator of $\E[Y_L]$}. 

We consider for the remainder of this section for convenience that the approximation scheme is based on a sequence of equidistant nested time discretizations $\tau = (\tau^\ell, \ell \in \N_0)$ given by
\begin{equation*}
 \tau^\ell
    = (t_n^\ell = T \cdot n \cdot 2^{-\ell}, n=0,\ldots,2^\ell)
\end{equation*}
with sequence of step sizes $(h_\ell = T\cdot 2^{-\ell}, \ell \in \N_0)$. 

Let us denote by $((p^\ell,q^\ell), \ell \in \N_0)$ the sequence of approximation schemes with respect to the sequence of time discretizations~$\tau$. We observe that Theorem~\ref{weak-order} implies in the setting of \cite[Theorem~1]{L15} that $a_\ell = 2^{-\ell}$, $\eta = 2^{-1}$ by Theorem~\ref{ms-order}, and $\kappa = 1$. Plugging in these values we obtain the following corollary.

\begin{corollary}
Consider the numerical discretizations~\eqref{dp} of the stochastic Hamiltonian system~\eqref{prob} satisfying the assumptions in Theorem~\ref{ms-order} and Theorem~\ref{weak-order}. Then for any differentiable test function~$f$ with derivative of polynomial growth, the multilevel Monte Carlo estimator on level~$L>0$ satisfies for any $\epsilon > 0$ at the final time~$T$
\begin{equation*}
 \|\E[f(p(T),q(T))] - E^L[f(p_{2^L}^L, q_{2^L}^L)]\|_{L^2(\Omega)}
  \le (C_1 + C_3 + C_2 \zeta(1 + \epsilon)) \, h_L
\end{equation*}
with sample sizes given by
$M_\ell = \lceil 2^{2(L - \ell/2)} \ell^{2(1+\epsilon)}\rceil$, $\ell = 1,\ldots,L$, $\epsilon > 0$, and $M_0 = \lceil 2^{2L} \rceil$, 
where $\lceil \cdot \rceil$ denotes the rounding to the next larger integer and
$\zeta$ the Riemann zeta function. 
The resulting computational work is bounded by
\begin{equation*}
 \cW_L
    = \bigo{h_L^{-2}L^{3+2\epsilon}}.
\end{equation*}
\end{corollary}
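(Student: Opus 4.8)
The plan is to obtain both assertions as a direct specialization of the abstract multilevel Monte Carlo complexity theorem \cite[Theorem~1]{L15}, whose structural inputs are already supplied by the preceding results, so that essentially nothing new has to be proved beyond identifying parameters and substituting. First I would fix the test function $f$ and write $Y = f(p(T),q(T))$, $Y_\ell = f(p_{2^\ell}^\ell,q_{2^\ell}^\ell)$ for the level-$\ell$ approximation at the final time, and set $\Delta_0 = Y_0$ and $\Delta_\ell = Y_\ell - Y_{\ell-1}$ for $\ell\ge1$, so that $E^L[Y_L] = \sum_{\ell=0}^L E_{M_\ell}[\Delta_\ell]$. The task then reduces to reading off the three rates required by \cite[Theorem~1]{L15} and plugging in the prescribed sample sizes.

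For the parameter identification I would argue as follows. Theorem~\ref{weak-order} gives weak order one, which together with $h_\ell = T2^{-\ell}$ yields the bias bound $|\E[Y]-\E[Y_\ell]| \le C_1 2^{-\ell}$, that is $a_\ell = 2^{-\ell}$. Theorem~\ref{ms-order} gives strong order one; since $a_\ell\le1$ this implies $\mathsf{Var}[\Delta_\ell] \le \|Y_\ell-Y_{\ell-1}\|_{L^2(\Omega)}^2 \le C_2 a_\ell^2 \le C_2 a_\ell^{2\eta}$ with $\eta = 2^{-1}$, which is the variance hypothesis of the theorem. Finally, the cost of one sample path at level $\ell$ is proportional to the number $2^\ell = T/h_\ell$ of time steps, giving the cost exponent $\kappa = 1$.

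Then I would carry out the two substitutions. Splitting the error into bias and statistical parts and using independence of the level estimators together with the triangle inequality,
\begin{equation*}
\|\E[Y] - E^L[Y_L]\|_{L^2(\Omega)} \le |\E[Y]-\E[Y_L]| + \sum_{\ell=0}^L M_\ell^{-1/2}\,\mathsf{Var}[\Delta_\ell]^{1/2}.
\end{equation*}
The bias contributes $C_1 h_L$, and the level-zero term contributes $C_3 h_L$ because $M_0^{-1/2}\le 2^{-L}$ and $\mathsf{Var}[Y_0]$ is finite. For $\ell\ge1$, inserting $M_\ell^{-1/2}\le 2^{-L}2^{\ell/2}\ell^{-(1+\epsilon)}$ and $\mathsf{Var}[\Delta_\ell]^{1/2}\le C_2 a_\ell^{\eta} = C_2 2^{-\ell/2}$ makes the powers of $2^\ell$ cancel, so that the sum collapses to $C_2 2^{-L}\sum_{\ell\ge1}\ell^{-(1+\epsilon)} \le C_2\,\zeta(1+\epsilon)\,h_L$, which is exactly where the Riemann zeta value enters; adding the three contributions gives the stated error bound. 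For the work I would compute $\cW_L = \sum_{\ell=0}^L M_\ell\cdot\bigo{2^\ell}$, where the level-zero term is $\bigo{2^{2L}}$ and each summand for $\ell\ge1$ equals $\bigo{2^{2L}\ell^{2(1+\epsilon)}}$, so that $\cW_L = \bigo{2^{2L}\sum_{\ell=1}^L \ell^{2(1+\epsilon)}} = \bigo{2^{2L}L^{3+2\epsilon}} = \bigo{h_L^{-2}L^{3+2\epsilon}}$, using $\sum_{\ell\le L}\ell^{2(1+\epsilon)} = \bigo{L^{3+2\epsilon}}$ and $h_L^{-2} = T^{-2}2^{2L}$.

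The part needing the most care is not this bookkeeping but the verification that $f(p^\ell,q^\ell)$ satisfies the integrability hypotheses of \cite[Theorem~1]{L15} uniformly in $\ell$: one must control $\mathsf{Var}[Y_0]$ and the variances $\mathsf{Var}[\Delta_\ell]$ for a test function that is only of polynomial growth rather than globally Lipschitz. This I would handle exactly as in the proof of Theorem~\ref{weak-order}, combining the polynomial-growth bound on $f'$ with the uniform moment bounds on the numerical solution from Lemma~2.2.2 in \cite{Milstein2004}, valid for $h$ small enough depending on the growth constant; intersecting this step-size restriction with the one from Theorem~\ref{ms-order} fixes the admissible range of $h$, and hence of $L$. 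Once these hypotheses are secured, the corollary follows immediately from \cite[Theorem~1]{L15}.
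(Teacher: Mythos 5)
Your overall route is the same as the paper's: the paper's entire proof consists of identifying, in the setting of \cite[Theorem~1]{L15}, the parameters $a_\ell = 2^{-\ell}$ (from Theorem~\ref{weak-order}), $\eta = 2^{-1}$ (from Theorem~\ref{ms-order}) and $\kappa = 1$, and then invoking that theorem. You do exactly this, and in addition you correctly unfold the internals of the cited theorem (the bias/statistical-error split, the $\zeta(1+\epsilon)$ sum over levels, and the work count $\cW_L = \bigo{2^{2L}L^{3+2\epsilon}} = \bigo{h_L^{-2}L^{3+2\epsilon}}$), all of which checks out.

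The one step you should repair is your justification of the variance hypothesis. You claim $\mathsf{Var}[\Delta_\ell] \le \|Y_\ell - Y_{\ell-1}\|_{L^2(\Omega)}^2 \le C_2 a_\ell^2$, i.e.\ strong order $1$ for the functional $Y_\ell = f(X_\ell)$. This does not follow from Theorem~\ref{ms-order} when $f$ is not globally Lipschitz: writing $f(X_\ell)-f(X_{\ell-1}) = \int_0^1 f'(\cdot)\,\dd s \cdot D$ with $D = X_\ell - X_{\ell-1}$, the $L^2(\Omega)$ norm of this product cannot be split into $\|f'\|_{L^2(\Omega)}\|D\|_{L^2(\Omega)}$; that splitting is only available for the \emph{expectation} of the product, which is what the proof of Theorem~\ref{weak-order} (to which you defer) actually uses. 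Bounding the $L^2$ norm of the product by Cauchy--Schwarz would require an $L^4$ error rate for the scheme, which the paper never establishes. What the available tools do give, via the splitting $\E\bigl[|f'|^2|D|^2\bigr] \le \E\bigl[|f'|^4|D|^2\bigr]^{1/2}\E\bigl[|D|^2\bigr]^{1/2}$ (the first factor merely bounded, using the uniform moment bounds, the second of size $\bigo{a_\ell^2}$ by Theorem~\ref{ms-order}), is $\mathsf{Var}[\Delta_\ell] \le C a_\ell = C a_\ell^{2\eta}$ with $\eta = 2^{-1}$. This H\"older-type argument is the real reason the paper takes $\eta = 2^{-1}$ rather than $\eta = 1$. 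Fortunately, your subsequent computation (sample sizes, zeta sum, work bound) only uses $\mathsf{Var}[\Delta_\ell]^{1/2} \le C_2 2^{-\ell/2}$, so the proof stands once the unjustified intermediate claim $\mathsf{Var}[\Delta_\ell] \le C_2 a_\ell^2$ is replaced by the argument above.
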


In conclusion we have seen that our drift-preserving scheme converges in general weakly with order~$1$, i.\,e., 
the same order as in mean square in Section~\ref{sect-ms}. 
Approximating quantities of interest with a standard Monte Carlo error with accuracy~$h$ requires computational work~$h^{-3}$. 
This can be reduced to essentially~$h^{-2}$ when a multilevel Monte Carlo estimator is applied instead.

\section{Numerical experiments}\label{sec:numerics}
This section presents various numerical experiments in order to illustrate 
the main properties of the drift-preserving scheme \eqref{dp}, denoted by \textsc{DP} below. 
In some numerical experiments, we will compare this numerical scheme with classical ones for SDEs 
such as the Euler--Maruyama scheme (denoted by \textsc{EM}) and 
the backward Euler--Maruyama scheme (denoted by \textsc{BEM}). 

\subsection{The linear stochastic oscillator}
We first consider the SDE \eqref{prob} with the following Hamiltonian
$$
H(p,q)=\frac12p^2+\frac12q^2
$$
and with $\Sigma=1$ and $W$ scalars. We take the initial values $(p_0,q_0)=(0,1)$. 

In this situation the integral in the drift-preserving scheme \eqref{dp} can be computed exactly, 
resulting in the following time integrator
\begin{equation*}
\begin{split}
\Psi_{n+1}&=\left(p_n+\Delta W_n-\frac{h}2q_n\right)\left(1+\frac{h^2}4\right)^{-1},\\
q_{n+1}&=q_n+h\Psi_{n+1},\\
p_{n+1}&=p_n+\Delta W_n-h\left(q_n+\frac{h}2\Psi_{n+1}\right).
\end{split}
\end{equation*}

Using the stepsize $h=5/2^4$, resp. $h=100/2^8$, and the time interval $[0,5]$, resp. $[0,150]$, we compute the expected values of the energy $H(p,q)$ along the numerical solutions. 
For this problem, we also use the stochastic trigonometric method from \cite{c12}, denoted by \textsc{STM}, 
which is know to preserve the trace formula for the energy for this problem. 
For comparison, the following splitting strategies are also used
\begin{itemize}
\item composition of the (deterministic) symplectic Euler scheme for the Hamiltonian part 
with an analytical integration of the Brownian motion (this scheme is denoted by \textsc{SYMP});
\item a splitting based on the decomposition  
$\text dq(t) = p(t)\,\text dt, \text dp(t) = \Sigma\, \text dW(t)$ 
and $\text dq(t) = 0\,\text dt, \text dp(t) = -V'(q(t))\,\text dt$ (this time integrator is denoted by \textsc{SPLIT}).
\end{itemize}

The expected values are approximated by computing averages over $M=10^6$ samples.
The results are presented in Figure~\ref{fig:traceSO}, where one can clearly observe the excellent behaviour of the drift-preserving scheme as stated 
in Theorem~\ref{thmTrace}. Observe that it can be shown that the expected value of the Hamiltonian 
along the Euler--Maruyama scheme drifts exponentially with time. Furthermore, 
the growth rate of this quantity along the backward Euler--Maruyama scheme is slower 
than the growth rate of the exact solution to the considered SDE, 
see \cite{smh04} for details. These growth rates are qualitatively different from the linear growth rate 
in the expected value of the Hamiltonian of the exact solution \eqref{etrace}, 
of the \textsc{STM} from \cite{c12}, 
and of the drift-preserving scheme \eqref{ntrace}. 
Although not having the correct growth rates, 
the splitting schemes behave much better than the classical Euler--Maruyama schemes. 
Further splitting strategies are under investigation in \cite{cv20}.

\begin{figure}[h]
\centering
\includegraphics*[height=5cm,keepaspectratio]{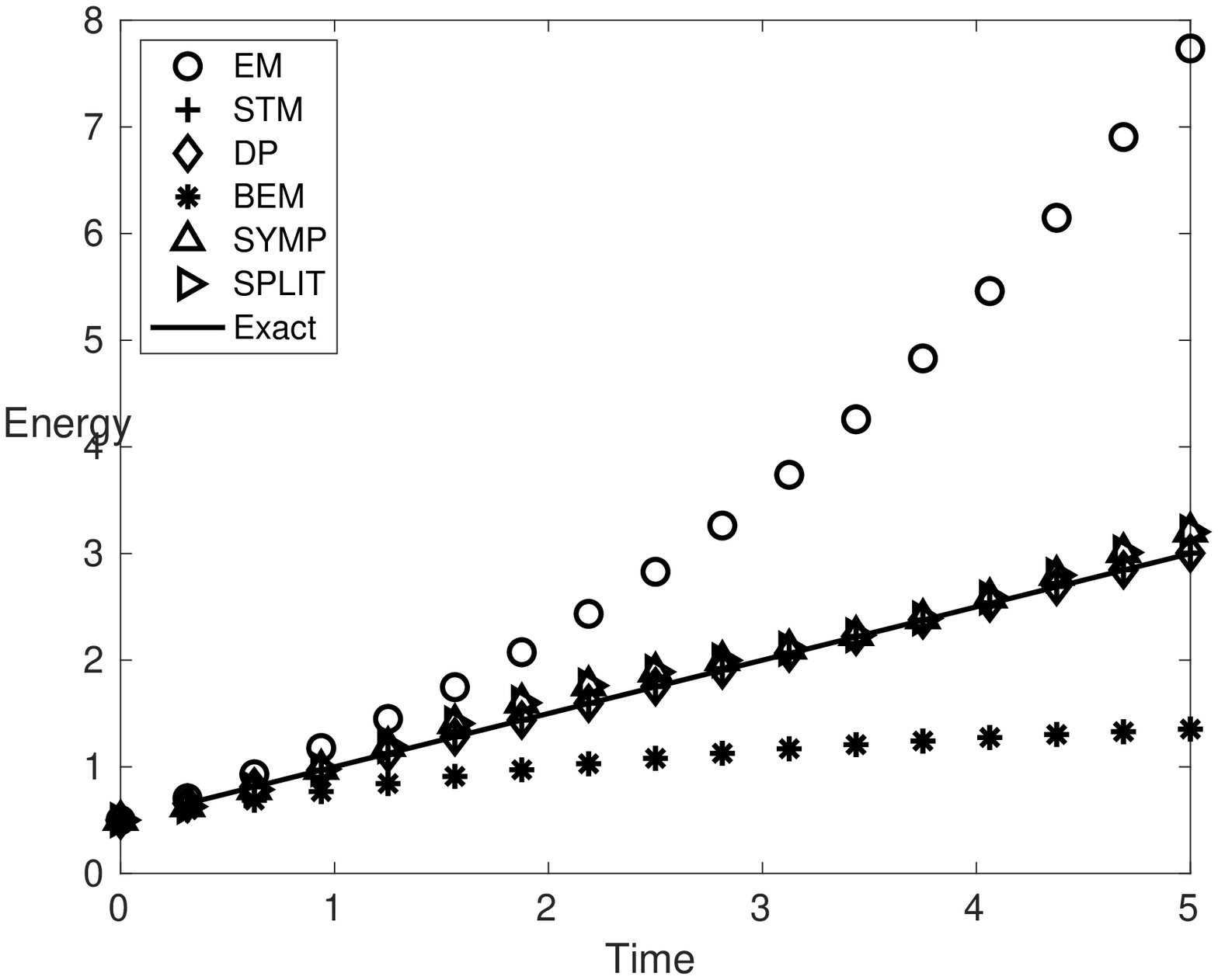}
\includegraphics*[height=5cm,keepaspectratio]{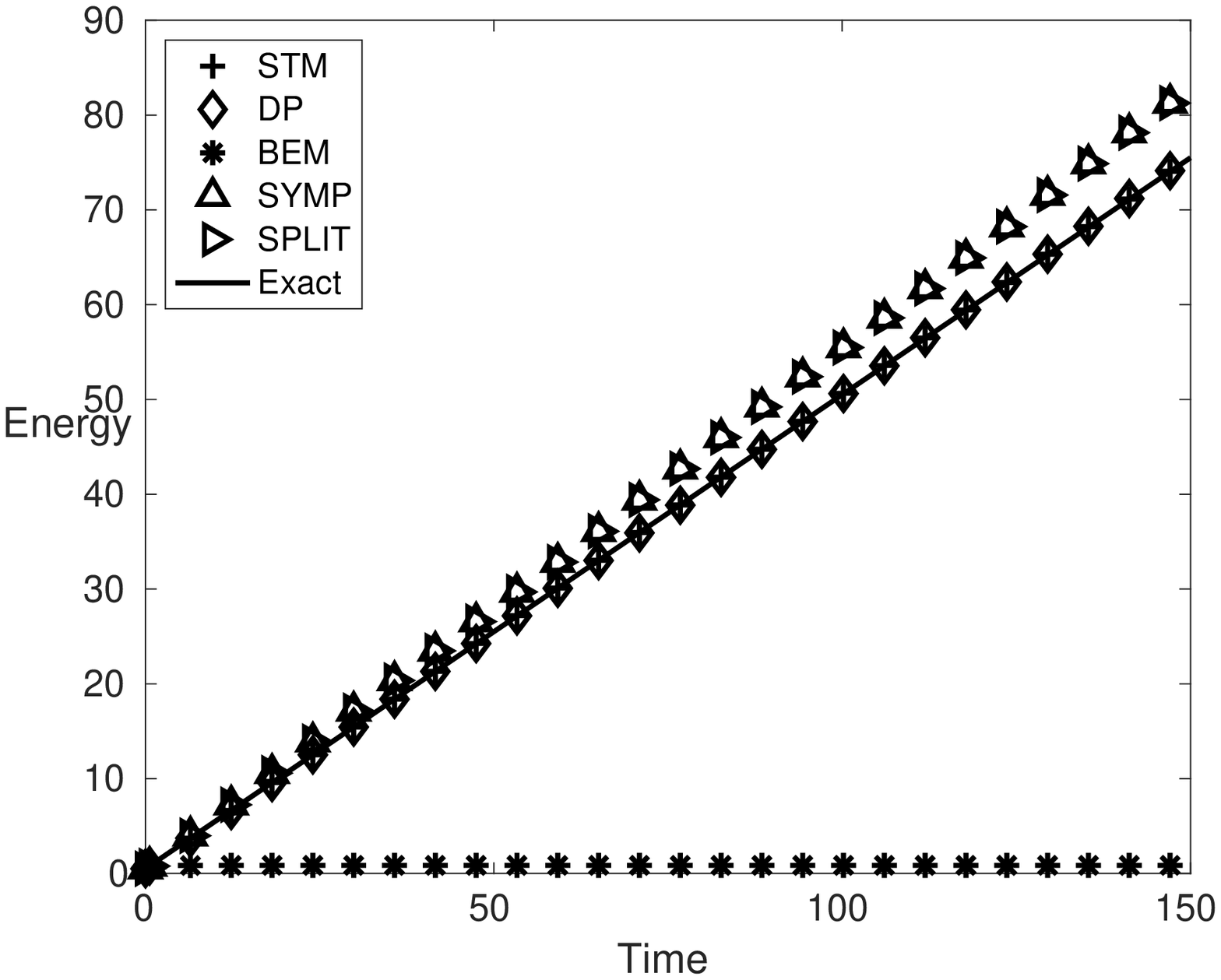}
\caption{Numerical trace formulas for the linear stochastic oscillator on $[0,5]$ (left) and $[0,150]$ (right).}
\label{fig:traceSO}
\end{figure}

We next illustrate numerically the strong convergence rate of the drift-preserving scheme stated in Theorem~\ref{ms-order}. 
Using the same parameters as above, we discretize the SDE on the interval $[0,1]$ 
using step sizes ranging from $2^{-5}$ to $2^{-10}$. The loglog plots of the errors are presented in Figure~\ref{fig:msSO}, 
where mean-square convergence of order $1$ for the proposed integrator is observed. The reference solution is computed 
with the stochastic trigonometric method using $h_{\text{ref}}=2^{-12}$. The expected values are approximated 
by computing averages over $M=10^5$ samples.

\begin{figure}[h]
\centering
\includegraphics*[height=5cm,keepaspectratio]{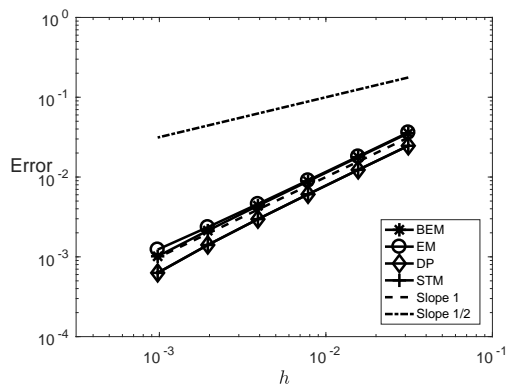}
\caption{Strong rates of convergence for the drift-preserving scheme (DP), the backward Euler--Maruyama scheme (BEM), the Euler--Maruyama scheme (EM), 
and the stochastic trigonometric method (STM) when applied 
to the linear stochastic oscillator.}
\label{fig:msSO}
\end{figure}

To conclude this subsection, we numerically illustrate the weak rates of convergence of the above time integrators. 
In order to avoid Monte Carlo approximations, we focus on weak errors in the first and second moments only, where 
all the expectations can be computed exactly. We use the same parameters as above except 
for $\Sigma=0.1$, $T=1$, and step sizes ranging from $2^{-4}$ to $2^{-16}$. 
The results are presented in Figure~\ref{fig:weakSO}, where one can observe weak order $2$ in the first moment and 
weak order $1$ in the second moment for the drift-preserving scheme. This is in accordance with the results from the 
preceding section.

\begin{figure}[h]
\centering
\begin{subfigure}[b]{0.55\textwidth}
   \includegraphics*[height=3.1cm,keepaspectratio]{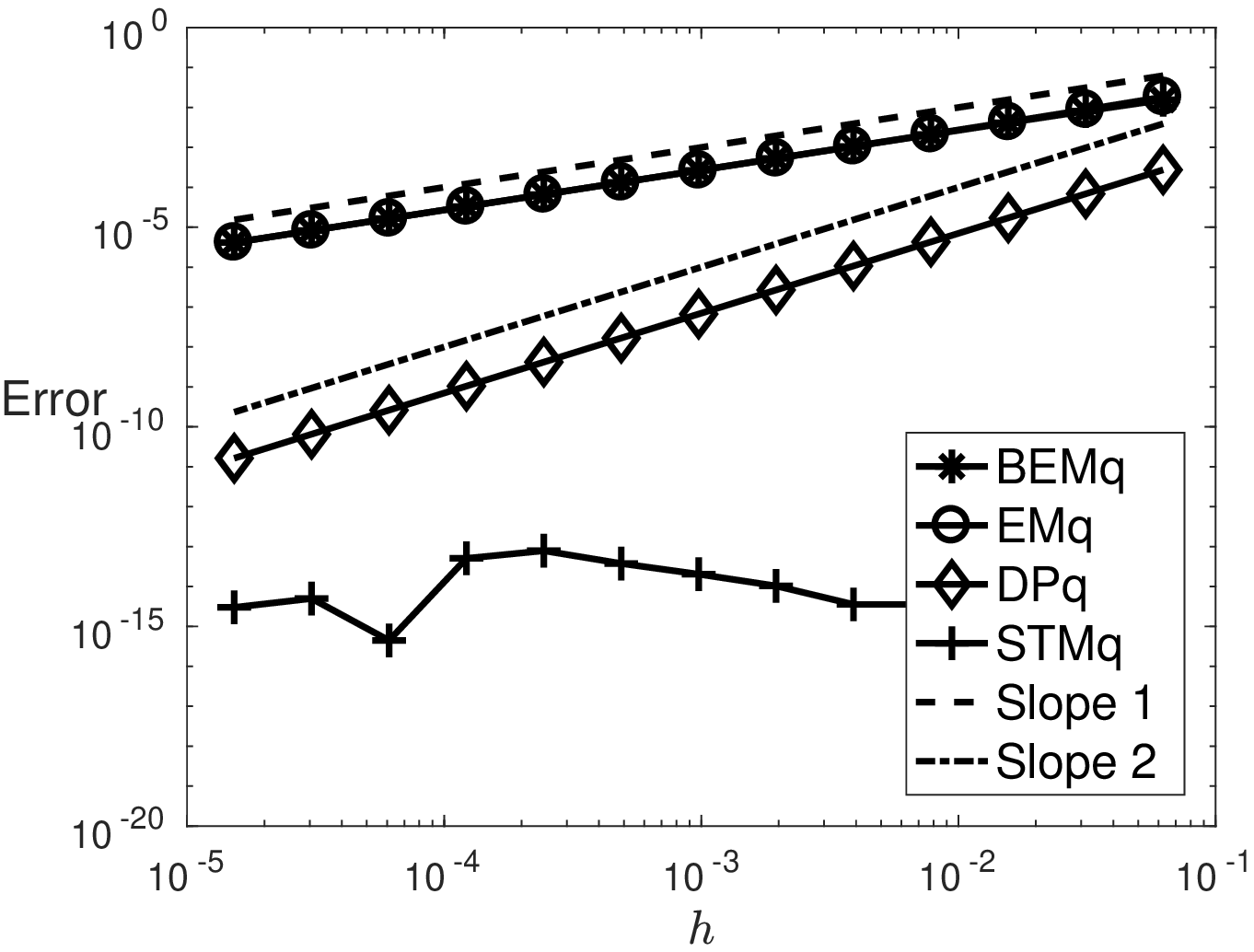}
   \includegraphics*[height=3.1cm,keepaspectratio]{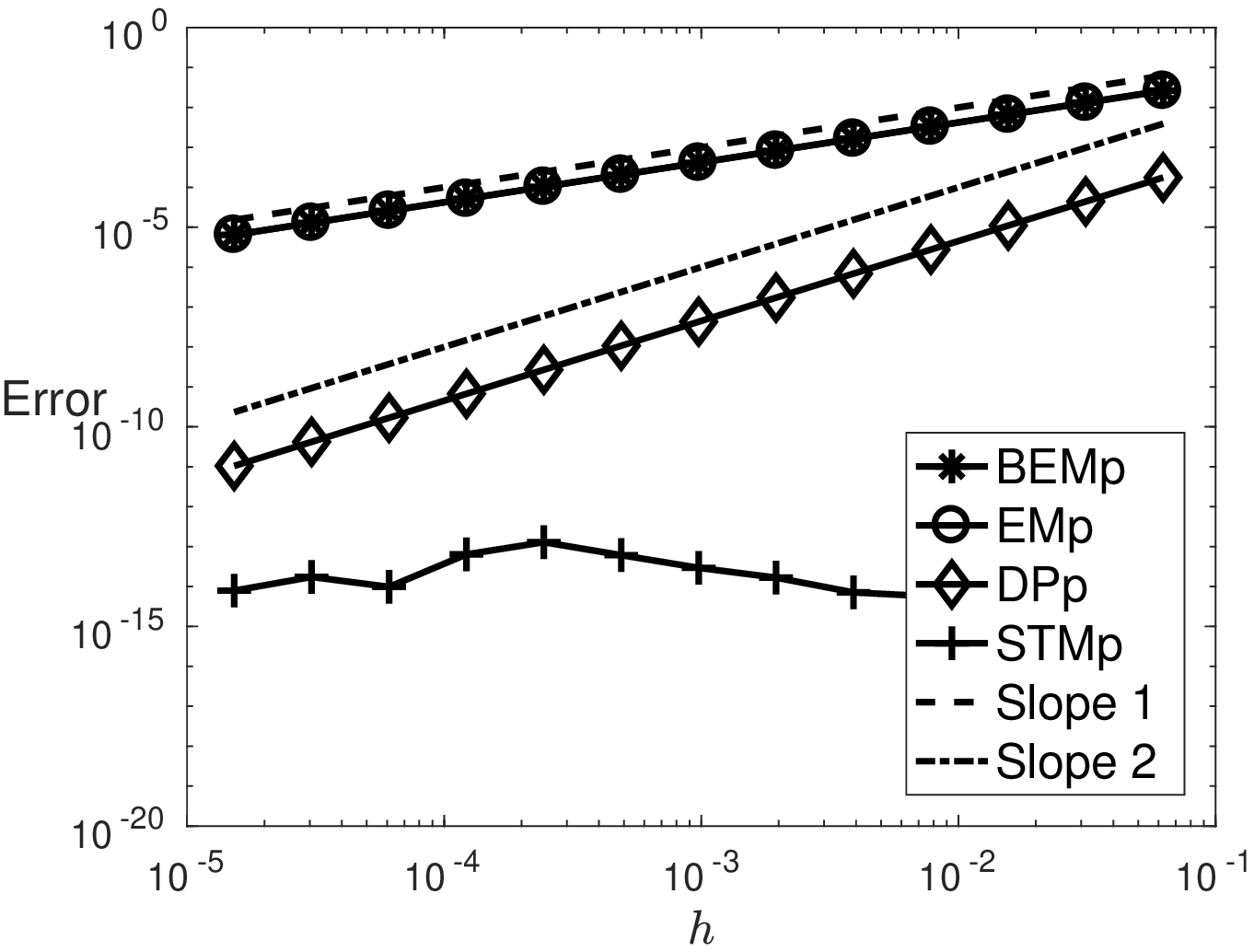}
   \caption{Errors in the first moment of $q$ (left) and $p$ (right).}
   \label{fig:w1} 
\end{subfigure}
\begin{subfigure}[b]{0.55\textwidth}
   \includegraphics*[height=3.1cm,keepaspectratio]{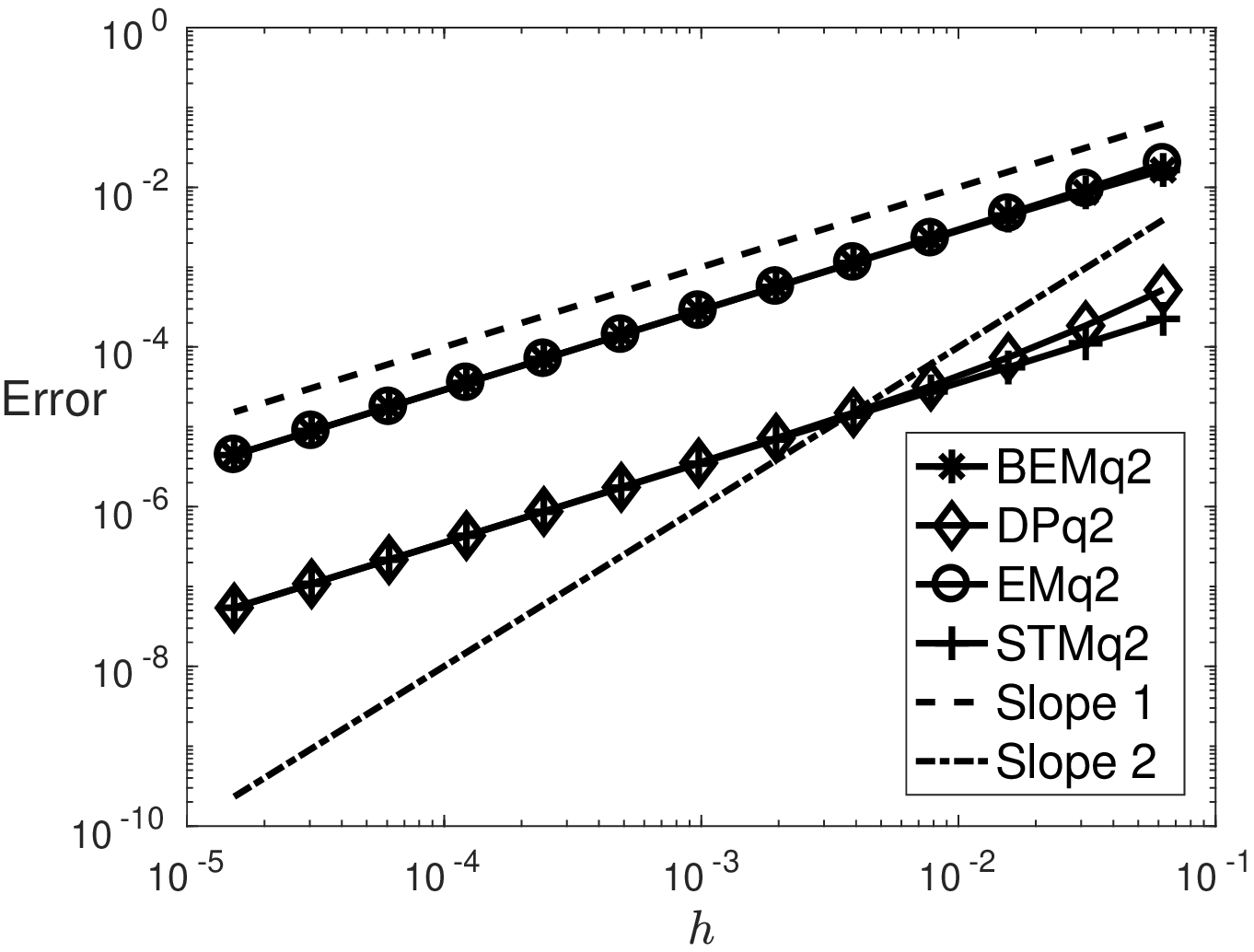}
   \includegraphics*[height=3.1cm,keepaspectratio]{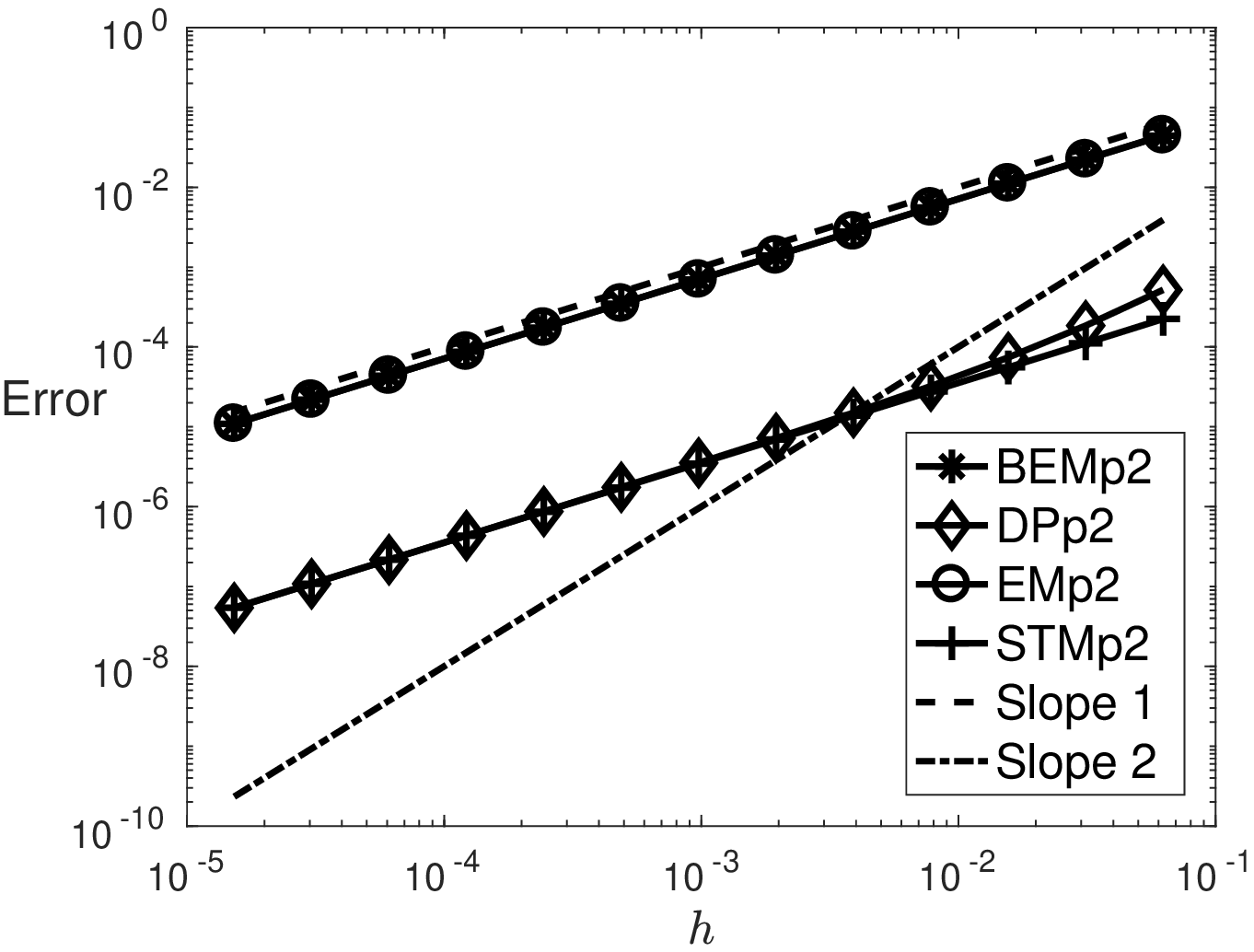}
   \caption{Errors in the second moment of $q$ (left) and $p$ (right).}
   \label{fig:w2} 
\end{subfigure}
\caption{Weak rates of convergence for the drift-preserving scheme (DP), 
the backward Euler--Maruyama scheme (BEM), the Euler--Maruyama scheme (EM), 
and the stochastic trigonometric method (STM) when applied 
to the linear stochastic oscillator.}
\label{fig:weakSO}
\end{figure}

\subsection{The stochastic mathematical pendulum}
Let us next consider the SDE~\eqref{prob} with the Hamiltonian
$$
H(p,q)=\frac12p^2-\cos(q)
$$
and with $\Sigma=0.25$ and $W$ scalars. We take the initial values $(p_0,q_0)=(1,\sqrt{2})$. 

For this problem, the proposed time integrator reads
\begin{align*}
\Psi_{n+1}&=p_n+\Sigma\Delta W_n-\frac{h}2\left(\frac{\cos(q_n)-\cos(q_n+h\Psi_{n+1})}{h\Psi_{n+1}}\right),\\
q_{n+1}&=q_n+h\Psi_{n+1},\\
p_{n+1}&=p_n+\Sigma\Delta W_n-h\left(\frac{\cos(q_n)-\cos(q_n+h\Psi_{n+1})}{h\Psi_{n+1}}\right).
\end{align*}

Using the stepsize $h=5/2^8$, resp. $h=10/2^{10}$, and the time interval $[0,5]$, resp. $[0,10]$, 
we compute the expected values of the energy $H(p,q)$ along the numerical solutions. 
Newton's iterations are used to solve the nonlinear systems in the drift-preserving scheme \eqref{dp} as well as in the BEM scheme. 
The expected values are approximated by computing averages over $M=10^6$ samples, resp. $M=10^5$ samples. 
The results are presented in Figure~\ref{fig:tracePend}, where one can clearly observe the perfect behaviour of the drift-preserving scheme as stated 
in Theorem~\ref{thmTrace} as well as the excellent behaviour of the splitting strategies. 

\begin{figure}[h]
\centering
\includegraphics*[height=5cm,keepaspectratio]{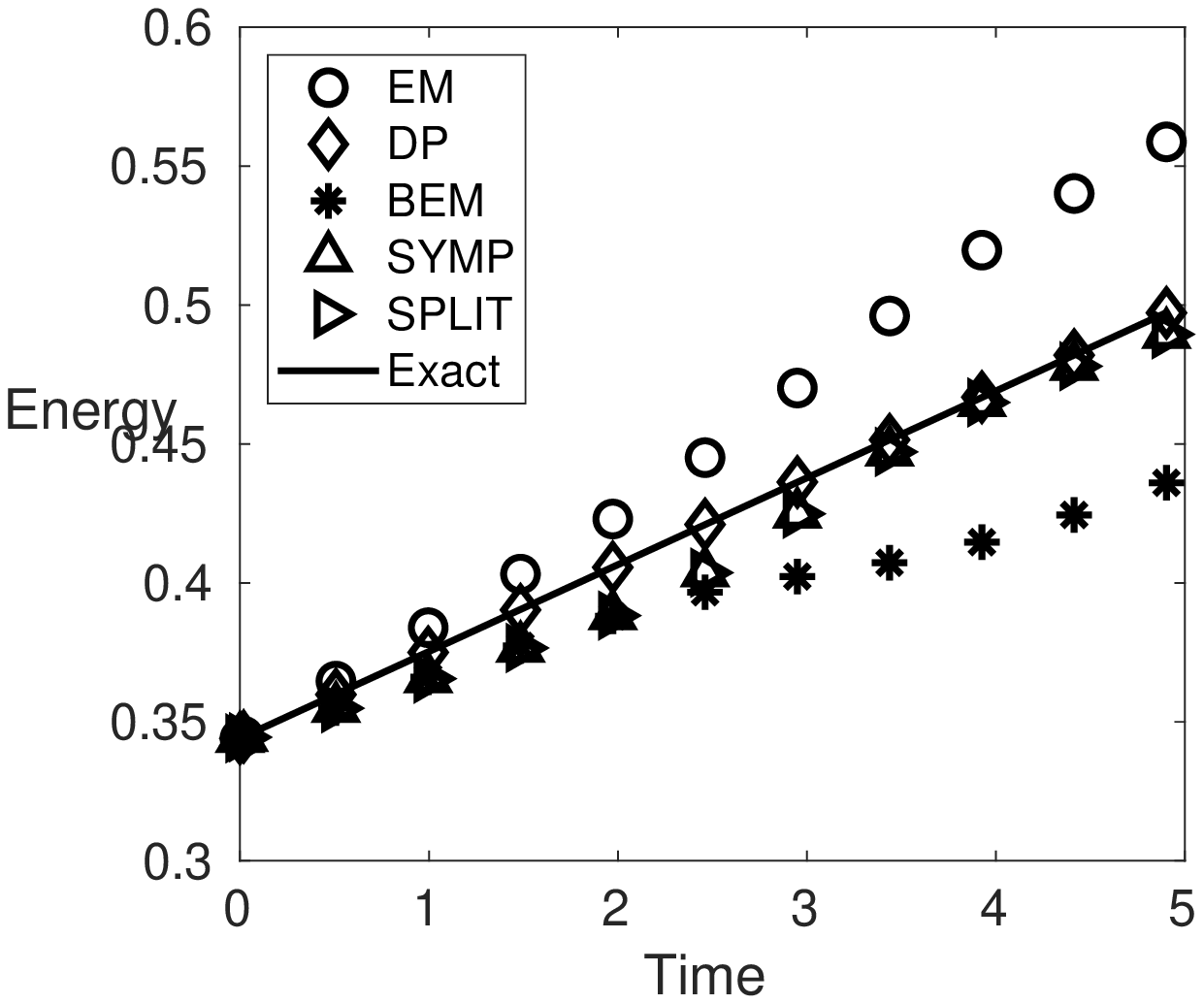}
\includegraphics*[height=5cm,keepaspectratio]{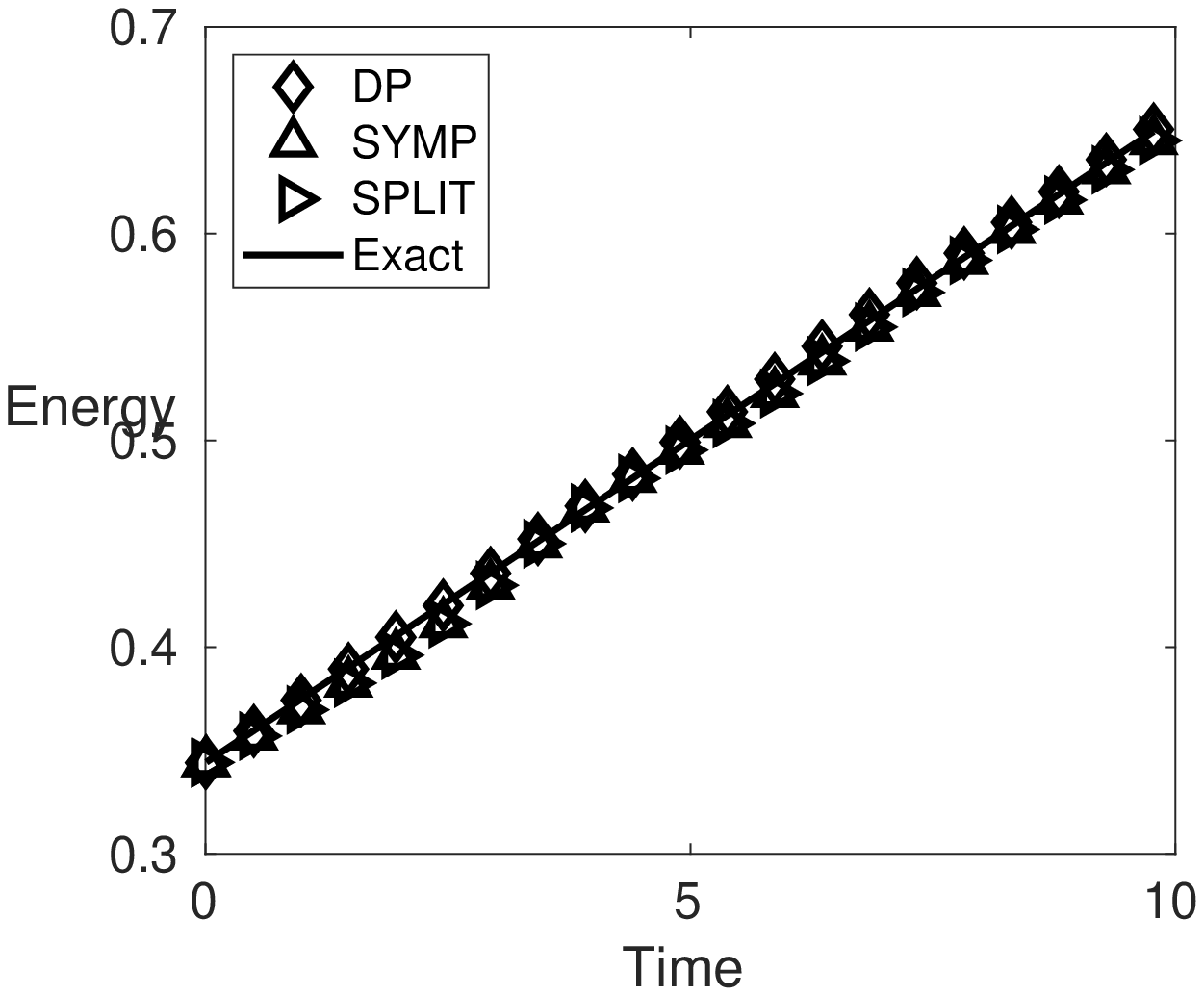}
\caption{Numerical trace formulas for the stochastic mathematical pendulum on $[0,5]$ (left) and $[0,10]$ (right).}
\label{fig:tracePend}
\end{figure}

We also illustrate numerically the strong convergence rate of the drift-preserving scheme stated in Theorem~\ref{ms-order}. 
For this, we discretize the stochastic mathematical pendulum with $\Sigma=0.1$ on the interval $[0,0.5]$ 
using step sizes ranging from $2^{-5}$ to $2^{-10}$. The loglog plots of the errors are presented in Figure~\ref{fig:msPend}, 
where mean-square convergence of order $1$ for the proposed integrator is observed. The reference solution is computed 
with the drift-preserving scheme using $h_{\text{ref}}=2^{-12}$. The expected values are approximated 
by computing averages over $M=10^5$ samples.

\begin{figure}[h]
\centering
\includegraphics*[height=5cm,keepaspectratio]{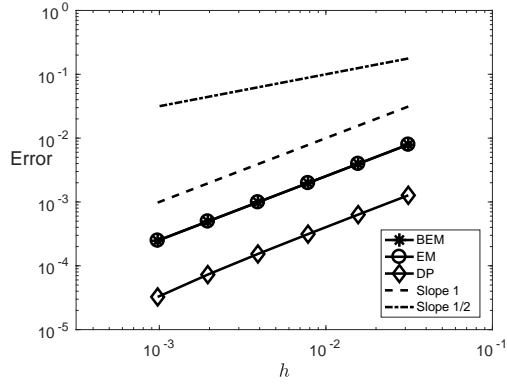}
\caption{Strong rates of convergence for the drift-preserving scheme (DP), the backward Euler--Maruyama scheme (BEM), and the Euler--Maruyama scheme (EM), 
when applied to the stochastic mathematical pendulum.}
\label{fig:msPend}
\end{figure}

We conclude this subsection by reporting numerical experiments illustrating the weak rates of convergence of the above time integrators. 
In order to reduce the Monte Carlo error and thus produce nice plots, we had to multiply the nonlinearity with a small coefficient of $0.2$ 
and considered the interval $[0,1]$. The other parameters are the same as above. The step sizes range from $2^{-1}$ to $2^{-6}$. 
The expected values are approximated by computing averages over $M=10^8$ samples.
The results are presented in Figure~\ref{fig:weakPend}, where one can observe weak order $2$, resp. $1$, 
of convergence for the drift-preserving scheme in the first, resp. second, moment in the variable $q$. 
This confirms the theoretical results from the previous section.

\begin{figure}[h]
\centering
\includegraphics*[height=5.cm,keepaspectratio]{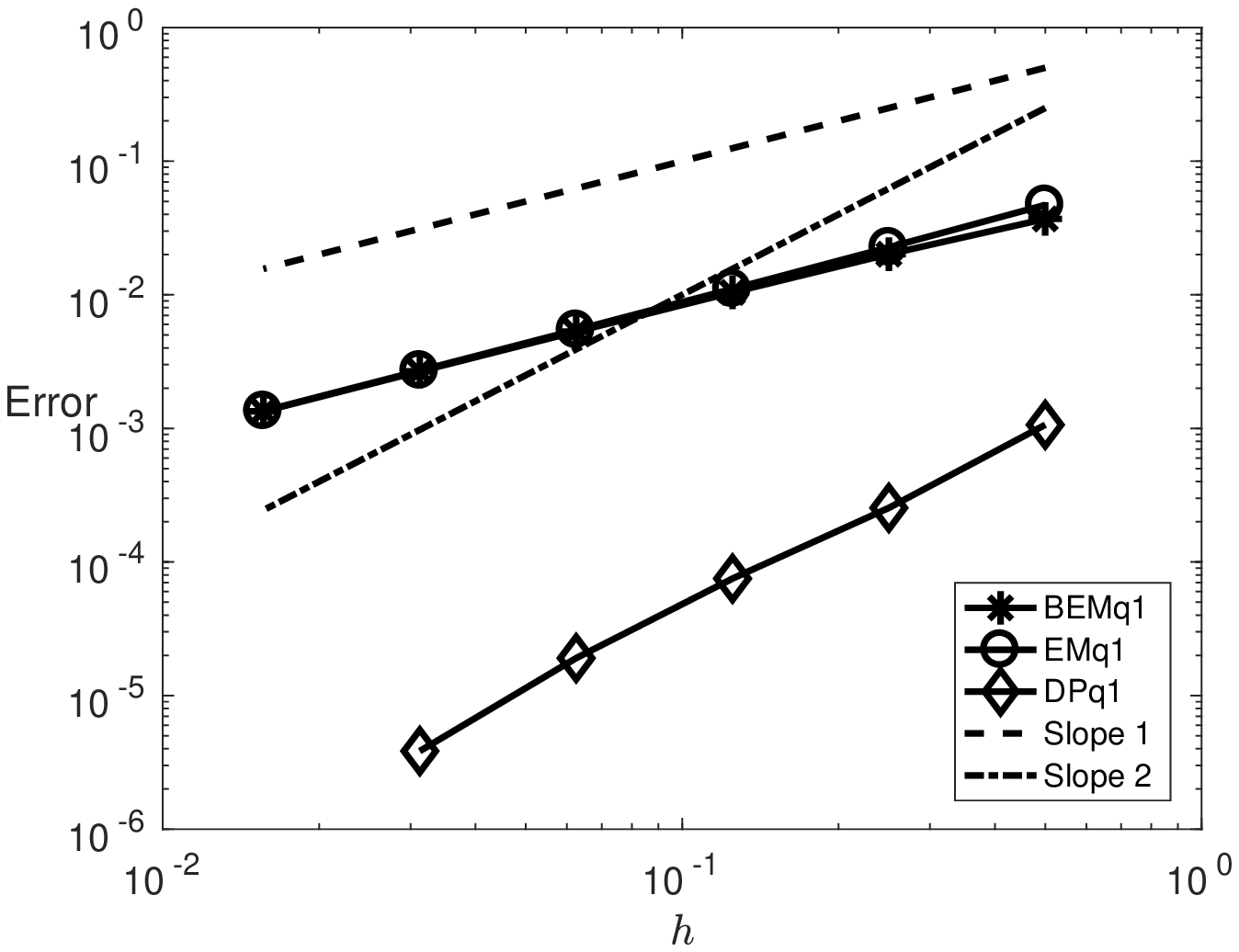}
\includegraphics*[height=5.cm,keepaspectratio]{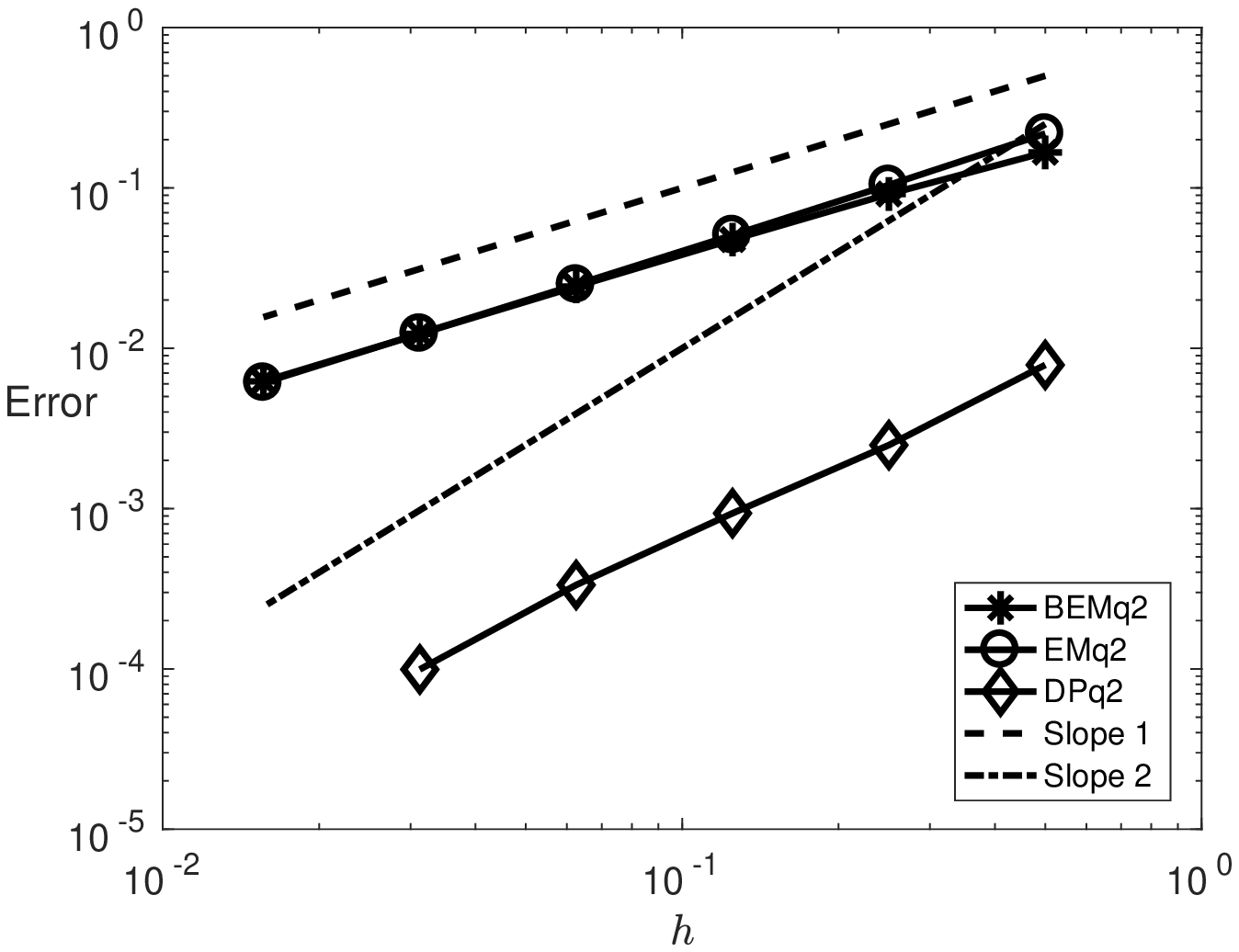}
\caption{Weak rates of convergence for the drift-preserving scheme (DP), 
the backward Euler--Maruyama scheme (BEM), and the Euler--Maruyama scheme (EM) 
when applied to the stochastic mathematical pendulum. Errors in the first moment of $q$ (left) 
and in the second moment of $q$ (right).}
\label{fig:weakPend}
\end{figure}

\subsection{Double well potential}
We consider the Hamiltonian with double well potential from \cite{bb14}. The SDE~\eqref{prob} is thus given by the Hamiltonian
$$
H(p,q)=\frac12p^2+\frac14q^4-\frac12q^2
$$
and with $\Sigma=0.5$ and $W$ scalars. We take the initial values $(p_0,q_0)=(\sqrt{2},\sqrt{2})$. 

When applied to the Hamiltonian with double well potential, the time integrator \eqref{dp} takes the form
\begin{align*}
\Psi_{n+1}&=p_n+\Sigma\Delta W_n-\frac{h}2\left( q_n^3-q_n-\frac{h}2\Psi_{n+1}+\frac{3h}{2}q_n^2\Psi_{n+1}+h^2q_n\Psi_{n+1}^2+\frac{h^3}4\Psi_{n+1}^3 \right),\\
q_{n+1}&=q_n+h\Psi_{n+1},\\
p_{n+1}&=p_n+\Sigma\Delta W_n-h\left( q_n^3-q_n-\frac{h}2\Psi_{n+1}+\frac{3h}{2}q_n^2\Psi_{n+1}+h^2q_n\Psi_{n+1}^2+\frac{h^3}4\Psi_{n+1}^3 \right).
\end{align*}

Using $2^{16}$ stepsizes of the drift-preserving scheme \eqref{dp} on the time interval $[0,50]$ 
(using fixed-point iterations for solving the implicit systems), and approximating 
the expected value with $M=10^5$ samples, we obtain 
the result displayed in Figure~\ref{fig:traceDouble}. This again numerically confirms  
the long time behaviour of the drift-preserving scheme stated in Theorem~\ref{thmTrace} 
and shows its superiority compared to the numerical schemes from \cite{bb14}, see the numerical results in \cite[Table~1]{bb14}.

\begin{figure}[h]
\centering
\includegraphics*[height=5cm,keepaspectratio]{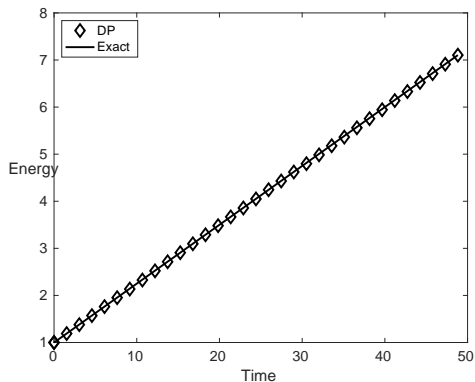}
\caption{Numerical trace formula for the stochastic Hamiltonian with double well potential on $[0,50]$.}
\label{fig:traceDouble}
\end{figure}

\subsection{H\'enon--Heiles problem with two additive noises}
Finally, we consider the H\'enon--Heiles problem with two additive noises from \cite{bb14,MR3873562}. This SDE is given 
by the Hamiltonian
$$
H(p,q)=\frac12\left(p_1^2+p_2^2\right)+\frac12\left(q_1^2+q_2^2\right)+\alpha\left(q_1q_2^2-\frac13q_1^3\right),
$$
and with $\Sigma=\text{diag}(\sigma_1,\sigma_2)$ and $W=( W^1, W^2)^\top$ 
in \eqref{prob}. We take the following parameters $\sigma_1=\sigma_2=0.2$, $\alpha=1/16$, 
and initial values $p_{0}=(1,1)$ and $q_0=(\sqrt{3},1)$.

For this system of SDEs, the drift-preserving scheme reads
\begin{align*}
&\begin{pmatrix}
\Psi_{1,n+1}\\
\Psi_{2,n+1}
\end{pmatrix}\\
&=
\begin{pmatrix}
p_{1,n}\\
p_{2,n}
\end{pmatrix}
+
\begin{pmatrix}
\sigma_1\Delta W^1_{n}\\
\sigma_2\Delta W^2_{n}
\end{pmatrix}
\\
&\quad-\frac{h}2
\begin{pmatrix}
q_{1,n}+\frac{h}2\Psi_{1,n+1}+\alpha\left( q_{2,n}^2+h\Psi_{2,n+1}q_{2,n}+\frac{h^2}3\Psi_{2,n+1}^2 \right)-\alpha\left( q_{1,n}^2+h\Psi_{1,n+1}q_{1,n}+\frac{h^2}3\Psi_{2,n+1}^2 \right)\\
q_{2,n}+\frac{h}2\Psi_{2,n+1}+2\alpha\left( q_{1,n}q_{2,n}+\frac{h}2q_{1,n}\Psi_{2,n+1}+\frac{h}2q_{2,n}\Psi_{1,n+1}+\frac{h^2}3\Psi_{1,n+1}\Psi_{2,n+1}  \right)
\end{pmatrix},\\
&q_{n+1}=q_n+h\Psi_{n+1}\\
&\begin{pmatrix}
p_{1,n+1}\\
p_{2,n+1}
\end{pmatrix}\\
&=
\begin{pmatrix}
p_{1,n}\\
p_{2,n}
\end{pmatrix}
+
\begin{pmatrix}
\sigma_1\Delta W^1_{n}\\
\sigma_2\Delta W^2_{n}
\end{pmatrix}
\\
&\quad-h
\begin{pmatrix}
q_{1,n}+\frac{h}2\Psi_{1,n+1}+\alpha\left( q_{2,n}^2+h\Psi_{2,n+1}q_{2,n}+\frac{h^2}3\Psi_{2,n+1}^2 \right)-\alpha\left( q_{1,n}^2+h\Psi_{1,n+1}q_{1,n}+\frac{h^2}3\Psi_{2,n+1}^2 \right)\\
q_{2,n}+\frac{h}2\Psi_{2,n+1}+2\alpha\left( q_{1,n}q_{2,n}+\frac{h}2q_{1,n}\Psi_{2,n+1}+\frac{h}2q_{2,n}\Psi_{1,n+1}+\frac{h^2}3\Psi_{1,n+1}\Psi_{2,n+1}  \right)
\end{pmatrix}.
\end{align*}

Using $2^{11}$ stepsizes of the drift-preserving scheme \eqref{dp} on the time interval $[0,50]$ 
(using fixed-point iterations for solving the implicit systems), and approximating 
the expected values with $M=10^5$ samples, we obtain 
the result displayed in Figure~\ref{fig:traceHH}. This figure again 
clearly illustrates the excellent long time behaviour of 
the proposed numerical scheme as stated in the above theorem. 

\begin{figure}[h]
\centering
\includegraphics*[height=5cm,keepaspectratio]{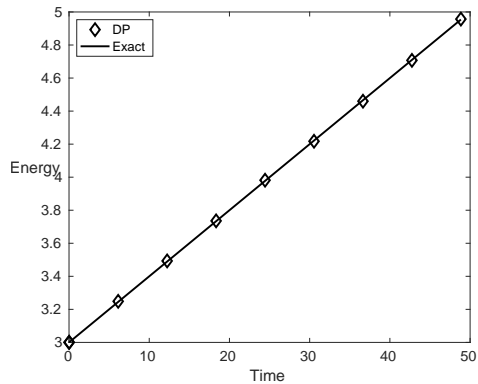}
\caption{Numerical trace formula for the stochastic H\'enon--Heiles problem on $[0,50]$.}
\label{fig:traceHH}
\end{figure}

Furthermore, the drift-preserving scheme \eqref{dp} outperforms the numerical schemes from \cite{bb14,MR3873562} 
in terms of preserving the expected value of the energy (compare Figure~\ref{fig:traceHH} 
to \cite[Table~2]{bb14} and \cite[Figure~6.7]{MR3873562}). 

\section{Acknowledgement}
We appreciate the referees' comments on an earlier version of the paper. 
We would like to thank Gilles Vilmart (University of Geneva) for interesting dicussions. 
The work of CCC is supported by the National Natural Science Foundation of China 
(NOs. 11871068, 11971470, 11711530017, 91630312, 11926417). 
The work of DC was supported by the Swedish Research Council (VR) 
(projects nr. $2013-4562$ and nr. $2018-04443$).
The work of RD was supported by GNCS-INDAM project and by PRIN2017-MIUR project 
"Structure preserving approximation of evolutionary problems". 
The work of AL was supported in part by the Swedish Research Council 
under Reg.~No.~621-2014-3995 and by the Wallenberg AI, Autonomous Systems and Software Program (WASP) 
funded by the Knut and Alice Wallenberg Foundation. 
This work was partially supported 
by STINT and NSFC Joint China-Sweden Mobility programme (project nr. $CH2016-6729$). 
The computations were performed on resources provided by 
the Swedish National Infrastructure for Computing (SNIC) 
at HPC2N, Ume{\aa} University. 

\bibliographystyle{plain}
\bibliography{biblio}

\begin{thebibliography}{10}

\bibitem{ac18}
R.~Anton and D.~Cohen.
\newblock Exponential integrators for stochastic {S}chr\"{o}dinger equations
  driven by {I}t\^{o} noise.
\newblock {\em J. Comput. Math.}, 36(2):276--309, 2018.

\bibitem{aclw16}
R.~Anton, D.~Cohen, S.~Larsson, and X.~Wang.
\newblock Full discretization of semilinear stochastic wave equations driven by
  multiplicative noise.
\newblock {\em SIAM J. Numer. Anal.}, 54(2):1093--1119, 2016.

\bibitem{bgi18}
L.~Brugnano, G.~Gurioli, and F.~Iavernaro.
\newblock Analysis of energy and quadratic invariant preserving ({EQUIP})
  methods.
\newblock {\em J. Comput. Appl. Math.}, 335:51--73, 2018.

\bibitem{bi12}
L.~Brugnano and F.~Iavernaro.
\newblock Line integral methods which preserve all invariants of conservative
  problems.
\newblock {\em J. Comput. Appl. Math.}, 236(16):3905--3919, 2012.

\bibitem{bb14}
P.~M. Burrage and K.~Burrage.
\newblock Structure-preserving {R}unge-{K}utta methods for stochastic
  {H}amiltonian equations with additive noise.
\newblock {\em Numer. Algorithms}, 65(3):519--532, 2014.

\bibitem{bb12}
P.M. Burrage and K.~Burrage.
\newblock Low rank runge-kutta methods, symplecticity and stochastic
  hamiltonian problems with additive noise.
\newblock {\em J. Comput. Appl. Math.}, 236:3920--3930, 2012.

\bibitem{cos14}
E.~Celledoni, B.~Owren, and Y.~Sun.
\newblock The minimal stage, energy preserving {R}unge-{K}utta method for
  polynomial {H}amiltonian systems is the averaged vector field method.
\newblock {\em Math. Comp.}, 83(288):1689--1700, 2014.

\bibitem{cch16}
C.~Chen, D.~Cohen, and J.~Hong.
\newblock Conservative methods for stochastic differential equations with a
  conserved quantity.
\newblock {\em Int. J. Numer. Anal. Model.}, 13(3):435--456, 2016.

\bibitem{chj18}
C.~Chen, J.~Hong, and D.~Jin.
\newblock Modified averaged vector field methods preserving multiple invariants
  for conservative stochastic differential equations.
\newblock {\em arXiv}, 2018.

\bibitem{c12}
D.~Cohen.
\newblock On the numerical discretisation of stochastic oscillators.
\newblock {\em Math. Comput. Simulation}, 82(8):1478--1495, 2012.

\bibitem{cchs19}
D.~Cohen, J.~Cui, J.~Hong, and L.~Sun.
\newblock Exponential integrators for stochastic {M}axwell's equations driven
  by {I}t\^o noise.
\newblock {\em arXiv}, 2019.

\bibitem{cd14}
D.~Cohen and G.~Dujardin.
\newblock Energy-preserving integrators for stochastic {P}oisson systems.
\newblock {\em Commun. Math. Sci.}, 12(8):1523--1539, 2014.

\bibitem{ch11}
D.~Cohen and E.~Hairer.
\newblock Linear energy-preserving integrators for {P}oisson systems.
\newblock {\em BIT}, 51(1):91--101, 2011.

\bibitem{cls13}
D.~Cohen, S.~Larsson, and M.~Sigg.
\newblock A trigonometric method for the linear stochastic wave equation.
\newblock {\em SIAM J. Numer. Anal.}, 51(1):204--222, 2013.

\bibitem{cs12}
D.~Cohen and M.~Sigg.
\newblock Convergence analysis of trigonometric methods for stiff second-order
  stochastic differential equations.
\newblock {\em Numer. Math.}, 121(1):1--29, 2012.

\bibitem{cv20}
D.~Cohen and G.~Vilmart.
\newblock Drift-preserving numerical integrators for stochastic {P}oisson
  systems.
\newblock {\em in preparation}, 2019.

\bibitem{cjz17}
H.~de~la Cruz, J.~C. Jimenez, and J.~P. Zubelli.
\newblock Locally linearized methods for the simulation of stochastic
  oscillators driven by random forces.
\newblock {\em BIT}, 57(1):123--151, 2017.

\bibitem{fhp04}
E.~Faou, E.~Hairer, and T.~L. Pham.
\newblock Energy conservation with non-symplectic methods: examples and
  counter-examples.
\newblock {\em BIT}, 51(44):699--709, 2004.

\bibitem{fl09}
E.~Faou and T.~Leli\`evre.
\newblock Conservative stochastic differential equations: mathematical and
  numerical analysis.
\newblock {\em Math. Comp.}, 78(268):2047--2074, 2009.

\bibitem{G08}
M.~B. Giles.
\newblock Multilevel {M}onte {C}arlo path simulation.
\newblock {\em Oper. Res.}, 56(3):607--617, 2008.

\bibitem{g96}
O.~Gonzalez.
\newblock Time integration and discrete {H}amiltonian systems.
\newblock {\em J. Nonlinear Sci.}, 6(5):449--467, 1996.

\bibitem{h10}
E.~Hairer.
\newblock Energy-preserving variant of collocation methods.
\newblock {\em J. Numer. Anal. Ind. Appl. Math.}, 5(1-2):73--84, 2010.

\bibitem{haluwa}
E.~Hairer, C.~Lubich, and G.~Wanner.
\newblock {\em Geometric Numerical Integration: Structure-Preserving Algorithms
  for Ordinary Differential Equations. Second Edition}, volume~31 of {\em
  Springer Series in Computational Mathematics}.
\newblock Springer-Verlag, Berlin, 2006.

\bibitem{MR3873562}
M.~Han, Q.~Ma, and X.~Ding.
\newblock High-order stochastic symplectic partitioned {R}unge--{K}utta methods
  for stochastic {H}amiltonian systems with additive noise.
\newblock {\em Appl. Math. Comput.}, 346:575--593, 2019.

\bibitem{H01}
S.~Heinrich.
\newblock Multilevel {M}onte {C}arlo methods.
\newblock In Svetozar Margenov, Jerzy Wasniewski, and Plamen~Y. Yalamov,
  editors, {\em Large-Scale Scientific Computing}, volume 2179 of {\em Lecture
  Notes in Computer Science}, pages 58--67. Springer-Verlag, Berlin, 2001.

\bibitem{hsw06}
J.~Hong, R.~Scherer, and L.~Wang.
\newblock Midpoint rule for a linear stochastic oscillator with additive noise.
\newblock {\em Neural Parallel Sci. Comput.}, 14(1):1--12, 2006.

\bibitem{MR3273263}
J.~Hong, D.~Xu, and P.~Wang.
\newblock Preservation of quadratic invariants of stochastic differential
  equations via {R}unge-{K}utta methods.
\newblock {\em Appl. Numer. Math.}, 87:38--52, 2015.

\bibitem{hzz11}
J.~Hong, S.~Zhai, and J.~Zhang.
\newblock Discrete gradient approach to stochastic differential equations with
  a conserved quantity.
\newblock {\em SIAM J. Numer. Anal.}, 49(5):2017--2038, 2011.

\bibitem{it09}
F.~Iavernaro and D.~Trigiante.
\newblock High-order symmetric schemes for the energy conservation of
  polynomial {H}amiltonian problems.
\newblock {\em J. Numer. Anal. Ind. Appl. Math.}, 4(1-2):87--101, 2009.

\bibitem{Kloeden1992}
P.~E. Kloeden and E.~Platen.
\newblock {\em Numerical Solution of Stochastic Differential Equations},
  volume~23 of {\em Applications of Mathematics (New York)}.
\newblock Springer-Verlag, Berlin, 1992.

\bibitem{k16}
H.~Kojima.
\newblock Invariants preserving schemes based on explicit {R}unge--{K}utta
  methods.
\newblock {\em BIT}, 56(4):1317--1337, 2016.

\bibitem{L15}
A.~Lang.
\newblock A note on the importance of weak convergence rates for {SPDE}
  approximations in multilevel {M}onte {C}arlo schemes.
\newblock In {\em Proceedings of MCQMC 2014, Leuven, Belgium}, 2015.

\bibitem{mqr99}
R.~I. McLachlan, G.~R.~W. Quispel, and N.~Robidoux.
\newblock Geometric integration using discrete gradients.
\newblock {\em R. Soc. Lond. Philos. Trans. Ser. A Math. Phys. Eng. Sci.},
  357(1754):1021--1045, 1999.

\bibitem{Milstein2004}
G.~N. Milstein and M.~V. Tretyakov.
\newblock {\em Stochastic Numerics for Mathematical Physics}.
\newblock Scientific Computation. Springer-Verlag, Berlin, 2004.

\bibitem{m14}
Y.~Miyatake.
\newblock An energy-preserving exponentially-fitted continuous stage
  {R}unge-{K}utta method for {H}amiltonian systems.
\newblock {\em BIT}, 54(3):777--799, 2014.

\bibitem{mb16}
Y.~Miyatake and J.~C. Butcher.
\newblock A characterization of energy-preserving methods and the construction
  of parallel integrators for {H}amiltonian systems.
\newblock {\em SIAM J. Numer. Anal.}, 54(3):1993--2013, 2016.

\bibitem{qm08}
G.~R.~W. Quispel and D.~I. McLaren.
\newblock A new class of energy-preserving numerical integration methods.
\newblock {\em J. Phys. A}, 41(4):045206, 7, 2008.

\bibitem{h08}
H.~Schurz.
\newblock Analysis and discretization of semi-linear stochastic wave equations
  with cubic nonlinearity and additive space-time noise.
\newblock {\em Discrete Contin. Dyn. Syst. Ser. S}, 1(2):353--363, 2008.

\bibitem{st15}
M.~J. Senosiain and A.~Tocino.
\newblock A review on numerical schemes for solving a linear stochastic
  oscillator.
\newblock {\em BIT}, 55(2):515--529, 2015.

\bibitem{smh04}
A.~H. Str{\o}mmen~Melb{\o} and D.~J. Higham.
\newblock Numerical simulation of a linear stochastic oscillator with additive
  noise.
\newblock {\em Appl. Numer. Math.}, 51(1):89--99, 2004.

\bibitem{wws13}
X.~Wu, B.~Wang, and W.~Shi.
\newblock Efficient energy-preserving integrators for oscillatory {H}amiltonian
  systems.
\newblock {\em J. Comput. Phys.}, 235:587--605, 2013.

\bibitem{MR3576620}
W.~Zhou, L.~Zhang, J.~Hong, and S.~Song.
\newblock Projection methods for stochastic differential equations with
  conserved quantities.
\newblock {\em BIT}, 56(4):1497--1518, 2016.

\end{thebibliography}

\end{document}